\newtheorem{theorem}{Theorem}[section]
\newtheorem{corollary}[theorem]{Corollary}
\newtheorem{lemma}[theorem]{Lemma}
\newtheorem{proposition}[theorem]{Proposition}
\theoremstyle{definition}
\newtheorem{definition}[theorem]{Definition}
\theoremstyle{remark}
\newtheorem{remark}[theorem]{\sc Remark}
\theoremstyle{remark}
\newtheorem{notations}[theorem]{\sc Notations}
\theoremstyle{remark}
\newtheorem{example}[theorem]{\sc Example}
\theoremstyle{remark}
\theoremstyle{remark}
\newtheorem{lem/defi}[thm]{Lemma/Definition}
\newtheorem{defi/lem}[thm]{Definition/Lemma}
\newtheorem{prop/defi}[thm]{Proposition/Definition}
\theoremstyle{definition}
\theoremstyle{remark}
\renewcommand{\Box}{\square}    
\newcommand{\gr}{{\mathrm{gr}}}
\newcommand{\con}{\mathrm{cont}}
\newcommand{\hot}{\mathrm{h.o.t.}}
\newcommand{\jac}{{\mathrm{Jac} \ }}
\renewcommand{\top}{{\mathrm{top}}}
\newcommand{\id}{{\mathrm{id}}}
\newcommand{\im}{\mathop{\rm{Im}}\nolimits}
\newcommand{\mult}{{\rm{mult}}}
\newcommand{\ord}{{\mathrm{ord}}}
\newcommand{\grad}{\mathop{\rm{grad}}\nolimits}
\newcommand{\Grad}{\mathop{\rm{grad}}\nolimits}
\newcommand{\enr}{{\rm{enr}}}
\newcommand{\Horn}{{\rm{Horn}}}
\newcommand{\e}{\varepsilon}
\newcommand{\fin}{\hspace*{\fill}$\Box$}
\newcommand{\oa}{\omega}
\newcommand{\ga}{\gamma}
\newcommand{\al}{\alpha}
\newcommand{\GC}{\mathcal{GC}}
\newcommand{\mcv}{\mathcal{V}}
\newcommand{\mer}{\mathscr M}
\newcommand{\ra}{\rightarrow}
\newcommand{\mt}{\mapsto}
\newcommand{\be}{\begin{equation}}
\newcommand{\ee}{\end{equation}}
\newcommand{\beqn}{\begin{eqnarray*}}
\newcommand{\eeqn}{\end{eqnarray*}}
\newcommand{\m}{\medskip}
\newcommand{\cD}{{\mathcal D}}
\newcommand{\I}{\mathcal{I}}
\newcommand{\cL}{{\mathcal L}}
\newcommand{\PP}{\mathcal{NP}}
\newcommand{\V}{\mathcal{V}}
\newcommand{\bR}{{\mathbb R}}
\newcommand{\bC}{{\mathbb C}}
\newcommand{\F}{\mathbb{F}}
\newcommand{\bP}{{\mathbb P}}
\newcommand{\bN}{{\mathbb N}}
\newcommand{\bQ}{{\mathbb Q}}
\newcommand{\C}{{\mathbb C}}
\newcommand{\Q}{\mathbb{Q}}
\newcommand{\Z}{\mathbb{Z}}
\newcommand{\R}{\mathbb{R}}
\begin{document}

\title[Concentration of curvature and Lipschitz invariants]{Concentration of curvature and Lipschitz invariants of holomorphic functions of two variables}

\author{Lauren\c tiu P\u aunescu}
\address{School of Mathematics and Statistics, University of Sydney,
  Sydney, NSW, 2006, Australia.}
\email{laurent@maths.usyd.edu.au}

\author{Mihai Tib\u ar}
\address{Math\' ematiques, UMR 8524 CNRS,
Universit\'e de Lille, \  59655 Villeneuve d'Ascq, France.}
\email{mihai-marius.tibar@univ-lille.fr}

\keywords{holomorphic functions, curvature of the Milnor fibre, Lipschitz invariants}
\subjclass[2010]{32S55, 32S15, 14H20, 58K20, 32S05}
\thanks{The authors acknowledge the support of the Labex CEMPI
(ANR-11-LABX-0007-01). They  thank the CIRM at Luminy and the MFO at Oberwolfach for hosting them in ``Research in Pairs'' programs during the preparation of this manuscript.  }

\begin{abstract}
 By  combining analytic and geometric viewpoints on the concentration of the curvature
 of the Milnor fibre,  we prove that Lipschitz homeomorphisms preserve the zones of multi-scale curvature concentration as well as the gradient canyon structure  of holomorphic functions of two variables. This yields the first new Lipschitz invariants
after those discovered by Henry and Parusi\'nski in 2003.
   

\end{abstract}

\maketitle
\setcounter{section}{0}

\section{Introduction}\label{s:intro}
 


For two variables holomorphic function germs,  the first bi-Lipschitz invariants, different from the topological invariants, were found around 2003 by Henry and Parusi\' nski \cite{HP, HP2} who showed that there are moduli.


In a different stream, Garcia Barroso and Teissier \cite{GT} had shown that the total curvature of the Milnor fibre concentrates in a multi-scale manner along a certain truncation of the generic polar curve of the two variable holomorphic function.

More recently, Kuo, Koike and P\u aunescu \cite{kkp, KKP} studied the bumps of curvature on the Milnor fibre by using the gradient canyons as key devices.

By using these three complementary viewpoints, all of which gravitate around the geometric and analytic properties of the polar curves, we show here that Lipschitz homeomorphisms preserve the gradient canyon structure of holomorphic functions $f : (\bC^{2}, 0)\to (\bC, 0)$, cf Theorem \ref{t:main2}. More precisely, the gradient canyons, together with their clustering and contact orders,  are Lipschitz invariants of holomorphic functions of two variables.
They complement the Henry-Parusi\' nski  continuous invariants \cite{HP, HP2}, as demonstrated by Example \ref{e:2}.  
 
 
 

In order to state our main result, we give an account of the canyon data  and send to the next sections for the details.
Let $\gamma_{*}$ denote some \emph{polar} of  $f = f(x,y) : (\bC^{2}, 0) \to (\bC, 0)$,  i.e. an irreducible curve which is a solution of the equation $f_{x}=0$. 
We consider some Newton-Puiseux parametrization of  it, i.e. of the form $\alpha(y) = (\gamma(y) , y)$,
which can be obtained by starting from a holomorphic parametrization $\alpha : (\bC,0) \to (\bC^{2}, 0)$, $\alpha(t) = (\alpha_{1}(t), \alpha_{2}(t))$ with  $\ord_{t}\alpha_{2} \le \ord_{t}\alpha_{1}$,
and then making the change of parameter  $\alpha(y) = (\gamma(y) , y)$ with $y = \alpha_{2}(t)$.  Then $m:= \mult(\gamma_{*}) := \min  \ord_{t}\alpha_{2}$  (minimum over all parametrisations)  is the 
\emph{multiplicity}  of the polar $\gamma_{*}$; it is also equal to the total number of parametrizations of  $\gamma_{*}$ of order $m$ which are conjugate. 

Let $d_{\gr}(\gamma)$ be the degree for which the order $\ord_y(\|\Grad f(\gamma(y),y)\|)$ of the gradient is stabilized, see \eqref{GG}, 
and let $\GC(\gamma_{*})$ be \emph{the gradient canyon of $\gamma$} (Definition \ref{d:canyon}).  Such a canyon contains one or more polars with the same  canyon degree $d_{\gr}(\gamma)$. The \emph{multiplicity of the canyon} $\mult(\GC(\gamma_{*}))$ is the sum of the multiplicities of its polars. 

\smallskip

Let us point out that the gradient canyons and their degrees are not topological invariants, see  Example \ref{e:2}. 

\medskip

While analytic maps do not preserve polars, we prove the analytic invariance of the canyons  as a preamble for the definition of our new bi-Lipschitz invariants, Theorem \ref{t:main1}: 
\emph{If $f = g\circ \varphi$ with $\varphi$ analytic bi-Lipschitz, then $\varphi$ transforms canyons into canyons by preserving their degrees and multiplicities.}
It follows that the map $\varphi$ establishes a bijection between the canyons  of $f$ and those of $g$
such that the degrees  $d_{\gr}(\gamma_{*})$ and the multiplicities $\mult(\GC(\gamma_{*}))$ are the same. 


\smallskip

When we drop the analyticity assumption of the bi-Lipschitz map $\varphi$, the  perspectives are challenging since not only that polar curves are not sent to polar curves, but we cannot prove anymore that gradient canyons are sent to gradient canyons.
Up to now, the only result in full generality has been obtained by Henry and Parusi\'nski \cite{HP, HP2}, namely the authors have 
found  that the leading coefficient in the expansion \eqref{eq:h-coefficient}, modulo an equivalence relation, is a
bi-Lipschitz invariant. More than that,  Henry and Parusi\'nski showed in \cite{HP, HP2} that  a certain zone in the Milnor fibre,  which is  characterised by
the higher order of the change of the gradient, is preserved by bi-Lipschitz homeomorphisms.

Our new  bi-Lipschitz invariants extend in a certain sense the discrete set of 
topological invariants of plane curves, but they refer to the branches of the polar curve instead of the branches
of the curve $\{f=0\}$. Our clustering description of the polar curves and their associated zones refines in a multi-scale manner the  Henry-Parusi\'nski zone. As of comparing our invariants to the Henry-Parusi\'nski Lipschitz continuous invariants,  Example \ref{e:2} shows that they are complementary.

We  establish in \S \ref{s:genpolars} a faithful correspondence between the concentration of curvature invariants coming from 
Garcia Barroso and Teissier's geometric study \cite{GT} and those coming from the Koike, Kuo and P\u aunescu analytic study \cite{KKP}, in particular we prove (Theorem \ref{t:tau}):
\emph{the contact degree $d_{\gamma(\tau)}$ and the gradient canyon $\GC(\gamma(\tau))$ do not depend of the direction $\tau$ of the polar $\gamma(\tau)$, for generic $\tau$.}
This result also contributes to the proof of our main  results Theorems \ref{t:main0} and  \ref{t:main2} in Section \ref{s:disks}, of which we give  a brief account in the following.

\smallskip
 Let $f = g \circ \varphi$ with $\varphi$ a bi-Lipschitz homeomorphism. Even if the image by  $\varphi$ of a gradient canyon is not anymore a gradient canyon like in the analytic category, our key result 
Theorem \ref{t:main0} says that:
 \emph{the bi-Lipschitz map $\varphi$ establishes a bijection between the canyon disks of $f$ and the canyon disks of $g$ by preserving the canyon degree},  
  where
the \emph{canyon disks}  are defined as the intersections of the \emph{horn domains} \eqref{eq:hd} with the Milnor fibre.
 
We are therefore in position to prove that $\varphi$ induces a bijection between the gradient canyons of $f$ and those of $g$, and  moreover, that there are \emph{clusters of canyons} of $f$ which correspond by $\varphi$ to similar clusters of $g$. Such clusters are defined in terms of orders of contact (i.e. certain rational integers) which are themselves bi-Lipschitz invariants.
Our main result, Theorem \ref{t:main2},  states minutiously this correspondence, and we send to Section \ref{s:disks} for its formulation  and the preparatory definitions.

\begin{example} \label{e:1} The function germ $f :=z^{4}+  z^{2}w^{2}+ w^{4}$ has 3 polars with canyon degrees $d=1$ belonging to a single canyon of multiplicity 3.  One can show\footnote{by using Kuo's trivialising vector field  in the family $f_{t}=z^{4}+  tz^{2}w^{2}+ w^{4}$ which is homogeneous of degree 4.} that $f$ is bi-Lipschitz equivalent to the function germ  $g :=z^{4}+ w^{4}$ which has a single polar, its canyon has degree $d=1$ and multiplicity 3. The canyon degree is  indeed a bi-Lipschitz invariant, according to our Theorem \ref{t:main0}, but the number of polars in the canyon is not invariant.
  \end{example}
  

\begin{example} \label{e:2}
For instance, consider the function germs $f(x,y)=x^3+y^{12}$ and $g(x,y)=x^3+y^{12}+ x^2y^5$ which are topologically equivalent\footnote{since $f_{t}(x,y)=x^3+y^{12}+ tx^2y^5$ is a topologically trivial family.}.
Then $g$ has two disjoint canyons corresponding to the two distinct polars, both having degree  $d=6$, whereas
$f$ has only one double polar with canyon degree $d=\frac{11}{2}$, hence only one canyon. According to our Theorem  \ref{t:main0}, these two function germs are not Lipschitz equivalent. Nevertheless they have the same Henry-Parusinski  invariants \cite{HP, HP2}.
\end{example}

\section{Gradient canyons}\label{structures}
We recall from  \cite{kuo-pau2} and \cite{KKP} some of the definitions and results that we shall use.

One calls \textit{holomorphic arc} the image  $\alpha_* :=\im(\tilde\alpha)$ of an irreducible plane curve germ:
\[ 
\tilde\al: (\bC,0)\longrightarrow (\bC^2,0), \quad \tilde\al(t)=(z(t),w(t)).
\] 
 It has a unique complex tangent line $T(\alpha_*)$ at $0$, considered as a point in the projective line, i.e.  $T(\alpha_*)\in\bC P^1$.  The total space of  holomorphic arcs was called ``enriched Riemann sphere'' in \cite{kuo-pau2, KKP}.

The classical Newton-Puiseux Theorem asserts that the field $\F$
of convergent fractional power series in $y$ is
algebraically closed, see e.g.  \cite{walker}, \cite{wall}. A non-zero element of $\F$ is a (finite or infinite)
convergent series with positive rational exponents: 
\[
\alpha(y)=a_0y^{n_0/N}+\cdots  +a_iy^{n_i/N}+\cdots ,\quad
n_0<n_1<\cdots,
\]

where $0\ne a_i\in \bC$, $N, n_i \in \bN$, $N>0$, with
$\mathrm{gcd}(N, n_0, n_1, ...)=1$,   $\limsup_{i}|a_i|^{\frac{1}{n_i}}<\infty$.

   The \textit{conjugates} of $\alpha$ are
\[ \alpha_{conj}^{(k)}(y) :=\sum a_i \theta^{kn_i}y^{n_i/N},\mbox{ where }
0\leq k\leq N-1  \mbox{ and } \theta :=e^\frac{2\pi \sqrt{-1}}{N}.
\]
The \textit{order} of $\alpha$ is 
\begin{eqnarray*}& \ord(\alpha) :=\ord_y(\alpha) =\frac{n_0}{N} \text{ if }\;\, \alpha\ne 0 \mbox{ and } \ord(\alpha) :=\infty  \text{ if } \alpha=0,&
\end{eqnarray*}
 and  $m_{puiseux}(\alpha) :=N$ is the \textit{Puiseux multiplicity} of $\alpha$.

For any $\alpha\in \F_1:=\{\alpha|\ord_y(\alpha)\geq 1\}$,   the map germ
\[ 
\label{para}
\tilde\alpha: (\bC,0)\longrightarrow (\bC^2,0), \quad t\mt (\alpha(t^N),t^N),\quad N :=m_{puiseux}(\alpha),
\] 
is holomorphic and the holomorphic arc $\alpha_*$ is then well defined.

\smallskip
\noindent
 One defines several subspaces of holomorphic arcs, as follows.

 For some fixed $\alpha_* =\im(\tilde\alpha)$,   one defines:
\begin{equation}\label{ID}
\mathcal{D}^{(e)}(\alpha_*;\rho):=\{\beta_*\,\mid \beta(y)=[J^{(e)}(\alpha)(y)+cy^e]+\hot ,\,\;|c|\leq \rho\},
\end{equation}
where $1\leq e<\infty$, $\rho\geq 0$, and where $J^{(e)}(\alpha)(y)$
is the $e$-\textit{jet} of $\alpha$ and ``$\hot$'' means as usual ``higher order terms''. Moreover, one defines:

\begin{equation}\label{IL}
\begin{split}\mathcal{L}^{(e)}(\alpha_*)&:=\mathcal{D}^{(e)}(\alpha_*;\infty) :=\cup_{0<\rho<\infty}\mathcal{D}^{(e)}(\alpha_*;\rho)\\&=
\{\beta_* \mid \beta(y)=[J^{(e)}(\alpha)(y)+cy^e]+\hot,\; |c| \in \bR\}.
\end{split}
\end{equation}



%
Note that in the above  definitions \eqref{ID} and \eqref{IL}, the parameter $\alpha\in \F_{1}$ runs over all its conjugates.

\m

%



Consider the Newton-Puiseux factorizations:
\begin{eqnarray}\label{ff}
f(x,y)=u\cdot \prod _{i=1}^m(x-\zeta_i(y)), \quad f_x(x,y)=v\cdot \prod_{j=1}^{m-1}(x-\ga_j(y)),
\end{eqnarray} 
where $\zeta_i$, $\gamma_j\in \F_1$ and $u,v$ are units.   Note that all conjugates of roots are also roots.
We call \textit{polar} any such root $\gamma_j$, as well as its geometric representation  $\gamma_{j*}$. 

If a polar $\gamma$ is also a root of $f$, i.e. $f(\gamma(y),y)\equiv 0$, then it is a multiple root of $f$. 

\m


From the Chain Rule it follows:
\begin{equation}\label{EMF}
f_x(\alpha(y),y)\equiv f_y(\alpha(y),y)\equiv 0 \implies f(\alpha(y),y)\equiv 0
\end{equation}
for any $\alpha\in \F_1$.
Let us fix a  polar $\gamma$ with $f(\gamma(y),y)\not \equiv 0$. By (\ref{EMF}), $\gamma$ is not a common Newton-Puiseux root of $f_z$ and $f_y$. 
If $q$ is sufficiently large, then one has the equality:
\begin{equation}\label{GG}
\ord_y(\|\Grad f(\gamma(y),y)\|)=\ord_y(\|\Grad f(\gamma(y)+uy^q,y)\|), \; \forall\, u\in \bC.
\end{equation}

\begin{definition}\label{d:canyon}
The \textit{gradient degree} $d_{\gr}(\gamma)$ is the smallest number $q$ such that (\ref{GG}) holds for generic $u\in \bC$.
In the case $f(\gamma(y),y)\equiv 0$, one sets $d_{\gr}(\gamma) :=\infty$.
\end{definition}

It turns out that the gradient degree $d_{\gr}(\gamma)$ is rational since it is a co-slope in 
a Newton polygon, see Lemma \ref{LH}(i) for $\alpha := \gamma$. It can also be interpreted as a \L ojasiewicz exponent.

\begin{definition}
Let $\gamma$ be a polar of gradient degree $d :=d_{\gr}(\gamma)$, $1\leq d \leq \infty$. The \textit{gradient canyon of $\gamma_*$} is by definition
\[ \GC(\gamma_*) :=\mathcal{L}^{(d)}(\gamma_*).
\]

One calls $d_{\gr}(\gamma_*) :=d_{\gr}(\gamma)$ the \textit{gradient degree of $\gamma_{*}$}, or the \textit{degree of $\GC(\gamma_*)$}.

One says that $\mathcal{GC}(\gamma_*)$ is \textit{minimal} if $d_{\gr}(\gamma_*)<\infty$ and if, for any polar $\gamma_i$ of finite degree, 
the inclusion $ \mathcal{GC}(\gamma_{i*}) \subseteq\mathcal{GC}(\gamma_*)$ implies  the equality $\mathcal{GC}(\gamma_{i*})= \mathcal{GC}(\gamma_*)$.
\end{definition}

\begin{definition}
The \textit{multiplicity} of the gradient canyon $\GC(\gamma_*)$ is defined as:
\begin{equation}\label{multi} 
\mult(\GC(\gamma_*)) :=\sharp \{j\,\mid 1\leq j \leq m-1,\; \gamma_{j*}\in\mathcal{GC}(\gamma_*)\},
\end{equation}
where $m$ is as in \eqref{ff}.
\end{definition}
\m 
Up to some generic unitary change of coordinates, one has the following presentation: 
\begin{equation}\label{mini} 
f(x,y) :=f_m(x,y)+f_{m+1}(x,y)+\hot,\end{equation}
where $f_k$ denotes a homogeneous  $k$-form,  with $f_m(1,0)\ne 0$  and  $m=\ord(f)$.


The initial form $f_m(x,y)$  factors as:
\begin{equation}\label{HG}f_m(x,y)=c(x-x_1y)^{m_1}\cdots (x-x_ry)^{m_r},\;\, m_i\geq 1, \;\, x_i\ne x_j \;\,\text{if}\;\,i\ne j,
\end{equation} 
and $1\leq r\leq m$, $m=m_1+\cdots +m_r$, $c\ne 0$. 

\m
We have that  $f_m(x,y)$ is degenerate if and only if $r<m$.
The following useful result sheds more light over the landscape of gradient canyons:

\begin{theorem}\label{thmB} \cite[Theorem B]{KKP}
Any gradient canyon of degree $1<d_{\gr}<\infty$ is a minimal canyon. 
The  canyons of degrees $1<d_{\gr}\leq \infty$ are mutually disjoint. 
 
There are exactly $r-1$ polars of gradient degree $1$, counting multiplicities,  and they belong to the unique gradient canyon of degree 1, denoted by $\bC_{\enr}$. 

If $1<r\leq m$,  then $\bC_{\enr}$ is minimal if and only if $f(z,w)$ has precisely $r$ distinct roots $\zeta_i$ in (\ref{ff}).
In particular, if $f_m(x,y)$ is non-degenerate then $\bC_{\enr}$ is minimal. \fin
\end{theorem}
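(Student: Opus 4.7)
The plan is to encode $d_{\gr}(\gamma)$ via the Newton polygon of $f$ at $\gamma$, and then combine this local input with the factorization of $f_x$ read off from the initial form (\ref{HG}). First I would verify (as announced after Definition~\ref{d:canyon}) that $d_{\gr}(\gamma)$ equals the smallest co-slope of the Newton polygon of $f(X+\gamma(y),y)-f(\gamma(y),y)$ in $(X,y)$, measured from the vertex carrying the $y$-monomial $f(\gamma(y),y)$. This pins down the threshold $q$ above which $\gamma(y)\mapsto\gamma(y)+uy^q$ leaves $\ord_y\|\Grad f\|$ unchanged for generic $u$, giving simultaneously the rationality of $d_{\gr}$ and its Lojasiewicz-exponent interpretation.

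For the minimality of a canyon $\GC(\gamma_*)$ with $1<d:=d_{\gr}(\gamma)<\infty$, suppose $\GC(\gamma'_*)\subseteq\GC(\gamma_*)$ with $d':=d_{\gr}(\gamma')<\infty$. Applying $\cL^{(d')}(\gamma'_*)\subseteq\cL^{(d)}(\gamma_*)$ to the arc $\gamma'$ forces $\gamma'(y)=J^{(d)}(\gamma)(y)+cy^d+\hot$ for some $c$; by the Newton polygon characterization of Step~1, this perturbation of $\gamma$ preserves $\ord_y\|\Grad f\|$, so $d_{\gr}(\gamma')=d$ and the two canyons coincide. The same $d$-jet matching gives disjointness: a common element $\beta_*\in\GC(\gamma_{1*})\cap\GC(\gamma_{2*})$ with $d_1\le d_2$ forces the $d_1$-jets of $\gamma_1$ and $\gamma_2$ to agree, whence $\cL^{(d_2)}(\gamma_{2*})\subseteq\cL^{(d_1)}(\gamma_{1*})$, and the minimality just proved upgrades inclusion to equality.

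Next I would count the polars of gradient degree $1$. Differentiating (\ref{HG}) gives
\[
(f_m)_x \;=\; c_0\prod_{i=1}^r (x-x_iy)^{m_i-1}\cdot Q_0(x,y),
\]
with $Q_0$ homogeneous of degree $r-1$ whose roots are distinct from all $x_i$. Lifting to the Newton-Puiseux factorization of $f_x$ in (\ref{ff}) splits the $m-1$ polars into $m-r$ polars tangent to some $x_iy$ and $r-1$ polars with tangents outside $\{x_1,\dots,x_r\}$. For a polar $\gamma$ of the second type, $\gamma(y)-\zeta_j(y)$ has order $1$ in $y$ for every root $\zeta_j$ of $f$, and Step~1 forces $d_{\gr}(\gamma)=1$; all such polars share the same $\cL^{(1)}$-set (the coefficient of $y$ is free in $\cL^{(1)}$), giving the unique canyon $\bC_{\enr}$ of degree $1$. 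Because $\cL^{(d)}(\gamma_*)\subseteq\cL^{(1)}(\gamma_*)=\bC_{\enr}$ for every polar $\gamma$ and every $d\ge 1$, $\bC_{\enr}$ is minimal iff no polar has finite degree $>1$. If $f$ has exactly $r$ distinct roots $\zeta_1,\dots,\zeta_r$ of multiplicities $m_i$, comparison of $f=u\prod(x-\zeta_i)^{m_i}$ with $f_x=v\prod(x-\zeta_i)^{m_i-1}\cdot Q$ identifies the $m-r$ ``in-tangent'' polars with the $\zeta_i$, so (\ref{EMF}) gives $d_{\gr}=\infty$ for each, and $\bC_{\enr}$ is minimal. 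Conversely, if two distinct roots $\zeta,\zeta'$ of $f$ share a tangent $x_iy$, an algebraic Rolle count inside the subfactor of $f_x$ attached to the cluster tangent to $x_iy$ produces an intermediate polar $\gamma\neq\zeta,\zeta'$ with tangent $x_iy$, so $1<d_{\gr}(\gamma)<\infty$ and $\GC(\gamma_*)\subsetneq\bC_{\enr}$, destroying minimality.

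The hard part is Step 1, namely pinning down the Newton polygon description of $d_{\gr}$ and showing that the sets $\cL^{(d)}(\gamma_*)$ match exactly the perturbations of $\gamma$ that preserve the order of $\|\Grad f\|$. Once this dictionary is fixed, everything else is jet-matching together with combinatorics on (\ref{HG}). The Rolle step also requires some care, because ``between'' must be read inside the Newton-Puiseux factorization of $f_x$ rather than on a real line; the correct substitute is a degree count on the subfactor of $f_x$ attached to the tangent direction $x_iy$, forcing exactly one extra linear factor besides those explained by $(x-\zeta)^{a-1}(x-\zeta')^{b-1}$.
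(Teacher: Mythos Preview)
The present paper does not give a proof: Theorem~\ref{thmB} is quoted from \cite{KKP} and closed with a $\Box$, so there is nothing here to compare your sketch against except the tools the paper does develop independently. Those tools are precisely the ones you invoke. Your ``Step~1'' is Lemma~\ref{LH} (identifying $d_\alpha$ with the co-slope $\sigma^{*}$ of the line $L$ attached to $\PP(f,\alpha)$), and the statement that every arc in $\GC(\gamma_*)$ has valley degree $d_{\gr}(\gamma)$ is Proposition~\ref{p:alpha}(b). With those two facts your minimality and disjointness arguments go through as written.

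One phrasing to tighten: the sentence ``this perturbation of $\gamma$ preserves $\ord_y\|\Grad f\|$, so $d_{\gr}(\gamma')=d$'' compresses two different things. What the Newton polygon actually gives is that the substitution $Z\mapsto Z+cW^{d}+\hot$ sends the $\omega(d)$-initial polynomial $P(Z)$ of $F_Z$ to $P(Z+c)\not\equiv 0$, hence preserves the co-slope $\sigma^{*}=d$; Lemma~\ref{LH} then reads off $d_{\gr}(\gamma')=d$ directly. Preservation of $\ord_y\|\Grad f\|$ is a consequence of this (via Proposition~\ref{p:alpha}(c)), not the mechanism, and by itself would not give the degree.

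Your worry about the ``Rolle'' step is misplaced: no notion of ``between'' is needed, only a multiplicity count. Fix a tangent direction $x_iy$ and let $k\ge 2$ be the number of distinct Puiseux roots of $f$ with that tangent, of multiplicities $a_1,\dots,a_k$ summing to $m_i$. The initial form shows $f_x$ has exactly $m_i-1$ polars with tangent $x_iy$; of these, $\sum_j(a_j-1)=m_i-k$ coincide with roots of $f$ and hence have $d_{\gr}=\infty$, leaving exactly $k-1\ge 1$ polars $\gamma$ tangent to $x_iy$ that are \emph{not} roots. For any such $\gamma$ the tangency forces $h:=\ord_y f(\gamma(y),y)>m$, so $\ord_y\|\Grad f(\gamma,y)\|=h-1>m-1=\ord_y\|\Grad f(\gamma+uy,y)\|$ for generic $u$, giving $1<d_{\gr}(\gamma)<\infty$. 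This handles both directions of the last equivalence at once, and your final caveat about reading ``between'' inside the Puiseux factorization can be dropped.
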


\m

\noindent 
\textbf{The horn, the partial Milnor number, and the total curvature of a gradient canyon.}\

A well-known formula to compute the Milnor number $\mu_f$  is the following:

\begin{equation}\label{MWK}
\mu_f=\sum_{j=1}^{m-1}\left[ \ord_y(f(\ga_j(y),y))-1\right] , 
\end{equation}
where the sum runs over all $\ga_j$, i.e. over all polars and their conjugates.

\m 

One defines the \emph{Milnor number of $f$ on a gradient canyon} $\mathcal{GC}(\ga_*)$ with $d_{\gr}(\ga_*)<\infty$, as:
\begin{eqnarray}\label{M}
&\mu_f(\mathcal{GC}(\ga_*))\!:=\sum_j[\ord_y(f(\ga_j(y),y))-1],&
\end{eqnarray}
where the sum is taken over all $j$, $1\leq j\leq m-1$, such that 
$\ga_{j*}\in\mathcal{GC}(\ga_*)$.

From (\ref{M}) and (\ref{multi})  one has:
\begin{eqnarray*}&
\mu_f(\GC(\ga_*))+\mult(\GC(\ga_*))=\sum_j\ord_y(f(\ga_j(y),y)),&
\end{eqnarray*}
where $j$ runs like  in the sum of  (\ref{M}).
%

Consider 
$\mathcal{D}^{(e)}(\al_*;\rho)$ as in \eqref{ID}, of finite order $ e \geq1$ and finite radius $\rho >0 $, and
 a compact ball
$B(0;\eta)\!:=\{(x,y)\in \C^2\,\mid \sqrt{|x|^2+|y|^2}\leq\eta\}$ with small enough $\eta>0$ (usually we consider a Milnor ball of $f$).
Let then:

\begin{equation}\label{eq:hd}  
\Horn^{(e)}(\al_*;\rho;\eta) :
=\{(x,y)\in B(0;\eta)   \mid x= \beta(y)=J^{(e)}(\al)(y)+cy^{e}, |c|\leq \rho\} 
\end{equation}

be the  \textit{horn domain}  associated to $\mathcal{D}^{(e)}(\al_*;\rho)$; it is a compact subset of $\C^2$.

The \textit{total asymptotic Gaussian curvature over} $\mathcal{D}^{(e)}(\al_*;\rho)$ is then by definition:
\begin{equation}\label{eq:measure} 
\mer_f(\mathcal{D}^{(e)}(\al_*;\rho)) :=\lim_{\eta\rightarrow 0}\left[ \lim_{\lambda\rightarrow 0}\int_{\{f=\lambda\}\cap \Horn^{(e)}(\al_*; \rho;\eta)}KdS\right] ,
\end{equation}
where $S$ is the surface area and $K$ is the Gaussian curvature. 

The total asymptotic Gaussian curvature over $\mathcal{L}^{(e)}(\al_*)$ as in \eqref{IL} is by definition:
\begin{equation}\label{eq:measure2}
\mer_f(\mathcal{L}^{(e)}(\al_*))\!:=\lim_{\rho\ra \infty}\mer_f(\mathcal{D}^{(e)}(\al_*;\rho)).
\end{equation}

The above definitions are easily extended to the case $e=\infty$, so that
\[
\mer_f(\mathcal{D}^{(\infty)}(\al_*))=\mer_f(\mathcal{L}^{(\infty)}(\al_*))=\mer_f(\{\al_*\})=0.
\]
Then, for the gradient canyon $\GC(\gamma_*) :=\mathcal{L}^{(d)}(\gamma_*)$ one has:

\begin{theorem} \cite[Theorem C]{KKP}
Let $\ga_*$ be a polar, $1< d_{\gr}(\ga_*)\leq\infty$. Then
\be\label{ttl}\mer_f(\GC(\ga_*))=\begin{cases}2\pi[\mu_f(\GC(\ga_*))+\mult(\GC(\ga_*)],&\quad 1<d_{\gr}(\ga_*)<\infty,\\0,&\quad d_{\gr}(\ga_*)=\infty.\end{cases}
\ee \fin
\end{theorem}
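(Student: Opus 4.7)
The overall plan is to reduce the integral $\int K\,dS$ over the slice $F_\lambda\cap \Horn^{(d)}(\gamma_*;\rho;\eta)$ to a sum of local contributions, one per polar branch contained in $\GC(\gamma_*)$, and to identify each local contribution with a summand in the Milnor number formula \eqref{MWK}.

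First I would use the standard expression for the Gaussian curvature form on a smooth complex curve $F_\lambda\subset \bC^2$ with its induced Euclidean metric,
$$K\,dS \;=\; -\,i\,\pt\bar\pt\, \log \|\Grad f\|^{2}\Big|_{F_\lambda},$$
valid at every point where $\Grad f\ne 0$. This expression (which underlies the approach of Garcia Barroso and Teissier in \cite{GT}) is exact wherever $\log\|\Grad f\|$ is smooth, so by Stokes's theorem the integral over $F_\lambda \cap \Horn^{(d)}(\gamma_*;\rho;\eta)$ splits into a boundary integral of the corresponding $d^c$-form along $\pt\bigl(F_\lambda\cap\Horn^{(d)}\bigr)$, together with residue contributions at points of the slice where $\Grad f$ tends to vanish. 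Using Theorem \ref{thmB} (the canyons of degree $>1$ are mutually disjoint) and the stabilization \eqref{GG} of $\ord_y\|\Grad f\|$ along the canyon rim, the contribution of the outer wall $\pt\Horn^{(d)}\cap F_\lambda$ vanishes in the iterated limit $\lambda\to 0,\ \eta\to 0,\ \rho\to\infty$, so only local indices at the polars $\gamma_{j*}\in \GC(\gamma_*)$ survive.

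Computing the local index at a single polar $\gamma_{j*}$ is the crucial step, and the one where I expect the main difficulty. Since $f_x(\gamma_j(y),y)\equiv 0$ but $f(\gamma_j(y),y)\not\equiv 0$ (because $d_{\gr}(\gamma_*)<\infty$), Weierstrass preparation along $\gamma_j$ allows one to expand
$$f(\gamma_j(y)+h,y)=f(\gamma_j(y),y)+\frac{f_{xx}(\gamma_j(y),y)}{2}\,h^{2}+\hot,$$
and, for $\lambda$ small, to describe the slice $F_\lambda$ near $\gamma_{j*}$ as a multivalued function $h(y,\lambda)$. Evaluating the degree of the complex Gauss map $p\mapsto [f_x(p):f_y(p)]\in\bC P^{1}$ on a small loop encircling the branch then produces a local index equal to $\ord_y f(\gamma_j(y),y)$, so that the local $K\,dS$ integral contributes exactly $2\pi \cdot \ord_y f(\gamma_j(y),y)$. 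The hard part is to keep careful track of the Puiseux multiplicities of the conjugates of $\gamma_j$ during this local calculation, so that the contributions of each conjugate are summed correctly and no contribution is double-counted across neighbouring (but disjoint) canyons.

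Summing over all $\gamma_{j*}\in\GC(\gamma_*)$, using \eqref{M} together with the identity
$$\sum_j \ord_y f(\gamma_j(y),y)=\mu_f(\GC(\gamma_*))+\mult(\GC(\gamma_*))$$
recorded just after \eqref{M}, yields the stated formula in the case $1<d_{\gr}(\gamma_*)<\infty$. In the remaining case $d_{\gr}(\gamma_*)=\infty$, $\gamma$ is a multiple root of $f$, so the horn $\Horn^{(e)}(\gamma_*;\rho;\eta)$ for $e\to\infty$ collapses onto the curve $\gamma_*$ itself, which has zero two-dimensional Hausdorff measure, while the Milnor fibre $F_\lambda$ recedes from any fixed tubular neighbourhood of this set as $\lambda\to 0$; hence $\mer_f(\GC(\gamma_*))=0$, in agreement with the convention $\mer_f(\{\gamma_*\})=0$ recorded just before the statement.
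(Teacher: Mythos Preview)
The paper does not contain its own proof of this statement: the theorem is quoted verbatim from \cite[Theorem C]{KKP} and closed with \fin\ immediately after the display, with no argument given. There is therefore nothing in the present paper to compare your proposal against; for an actual comparison you would have to consult the original proof in \cite{KKP}.

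That said, your outline is in the right spirit. The Langevin exchange formula you invoke is exactly what the authors use later in \S\ref{s:disks} (see \eqref{eq:exchange}--\eqref{eq:degree}), and the identity you are aiming at, $\frac{1}{2\pi}\mer_f(\GC(\gamma_*))=\sum_{\gamma_{j*}\in\GC(\gamma_*)}\ord_y f(\gamma_j(y),y)$, is precisely what is quoted in the proof of Lemma \ref{l:totalcurvconcentr}. Two points where your sketch would need real work: first, the local expansion $f(\gamma_j(y)+h,y)=f(\gamma_j(y),y)+\tfrac12 f_{xx}(\gamma_j(y),y)\,h^{2}+\hot$ tacitly assumes $\gamma_j$ is a \emph{simple} root of $f_x$, whereas polars can be multiple (cf.\ Example \ref{e:2}, where $f$ has a double polar), so the local index computation must be redone at that level of generality; second, the claim that the boundary contribution along $\pt\Horn^{(d)}\cap F_\lambda$ vanishes needs a quantitative estimate on $d^c\log\|\Grad f\|$ along the horn wall, not merely the stabilization of orders in \eqref{GG}.
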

%

\section{The arc valleys}\label{NParc} 

\subsection{Arc valleys and gradient canyons}\

We consider  a  Puiseux arc   $\alpha : (\bC, 0) \to (\bC^{2}, 0)$ 
of order $\ord_y(\alpha)\geq 1$. We may assume, modulo transposition and rescaling,  that $\alpha(y)=(\bar\alpha(y),y)$ where $\bar\alpha(y) $ is a fractional power series of order 
$\bar\ord_y(\alpha) \geq 1$;  in the following we identify  $\bar\alpha$ with $\alpha$.

We define the contact degrees: 
$$d_{\con}(\alpha) :=\inf \{d \mid  ||\Grad f(\alpha+ cy^{d}+\hot)||\sim ||\Grad f(\alpha)||,  \mbox{ for  generic }  c\in \bC^2 \}$$ 
and:  
$$ \tilde d_{\con}(\alpha) :=\inf \{d\mid   ||\Grad f(\alpha+cy^{d})||\sim ||\Grad f(\alpha)||,  \, \text{ for generic } \, c\in \bC^2 \}.$$

It follows from the definitions that $\tilde d_{\con}(\alpha)\leq  d_{\con}(\alpha)$.  Let us show that we actually have equality.  
 
If $\|\Grad f(\alpha(y)+u y^e)\|^2 :=D_{(\alpha,e)}(u)y^{2L_{gr}(\alpha,e)}+\hot,\, D_{(\alpha,e)}(u)\not \equiv 0,$
so for generic $u\in \bC^2$ it has the same order  $L_{gr}(\alpha,e)$ which is increasing in $e,$ i.e. 
$\|\Grad f(\alpha(y)+u y^e)\|\sim \|\Grad f(\alpha(y)+vy^r)\|, \forall r>e, v\in \bC^2 \,.$ 

If we set $\beta (y)=\alpha (y) +uy^e$ then $\|\Grad f(\alpha)\| \sim \|\Grad f(\alpha(y)+u y^e)\|=\|\Grad f(\beta)\| \sim \|\Grad f(\beta(y)+v y^r)\|=\|\Grad f(\alpha+uy^d+vy^r)\|$ and so on i.e. the two definitions give the same number (the order of the gradient stabilises, for instance if $r\geq \ord_y(\|\Grad f(\alpha(y))\|^2$).

\m

 Let $\alpha_*\in \bC_{\enr}$ be given, $f(\alpha(y),y)\not \equiv 0$,
 $\alpha$ is not a common Newton-Puiseux root of $f_x$ and $f_y$. Hence, if $q$ is sufficiently large, then
\begin{equation}\label{GGa}
\ord_y(\|\Grad f(\alpha(y),y)\|)=\ord_y(\|\Grad f(\alpha(y)+uy^q,y)\|), \; \forall\, u\in \bC.
\end{equation}

Let $d_{\alpha}$ denote the \textit{smallest number} $q$ such that (\ref{GGa}) holds for \textit{generic} $u\in \bC$. This definition gives the same degree as the previous definition and from now on we will use the later notation.

In case $f(\alpha(y),y)\equiv 0$, we  set  $d_{\alpha} :=\infty$ if $\alpha$ is a multiple root of $f$. 
\begin{definition} 
The \textit{ valley } of $\alpha_*$ is, by definition:
\[ \V(\alpha_*) :=\mathcal{L}^{(d_{\alpha})}(\alpha_*).
\]

We then call $d_{\alpha}$ the \textit{degree} of $\V(\alpha_*)$, or  \textit{ the valley degree of $\alpha$} (since it does not depend of the 
representative $\alpha$ of $\alpha_*$). 

We say that $\mathcal{V}(\alpha_*)$ is \textit{minimal} if $d_{\alpha_*}<\infty$ and if for every arc $\beta$ with $d_{\beta}<\infty$, we have:
\[ \mathcal{V}(\beta_{*}) \subseteq\mathcal{V}(\alpha_*) \Longrightarrow \mathcal{V}(\beta_{*})= \mathcal{V}(\alpha_*).
\]
\end{definition}

\begin{remark} In the case $\gamma$ is a polar the construction above gives  the notion of gradient canyon and canyon degree ($\V(\gamma_{*})=\mathcal{GC}(\gamma_{*}), d_{\gr}(\gamma)=d_{\gamma}$) as introduced in \cite{KKP} and mentioned earlier. 
If $\alpha$ is in a canyon then its valley coincides with the canyon.


 \end{remark}


%


\subsection{Newton polygon}
Let $\alpha$ be a given arc, $d_{\alpha}<\infty$.

We can apply a unitary transformation, if necessary, so that $\alpha\in \F_{\ge 1}$, $T(\alpha_*)=[0:1]$.

We then change variables (formally):
\begin{equation}\label{3b}
Z :=z-\alpha(w),\quad W :=w,\ \ 
F(Z,W) :=f(Z+\alpha(W),W).
\end{equation}

Since $\alpha\in \F_{\ge 1}$, i.e., $\ord(\alpha)>1$, it is easy to see that
\begin{equation}\label{D}
\|\Grad_{z,w} f\|\sim\|\Grad_{Z,W} F\|,\ \
\Delta_f(z,w)=\Delta_F(Z,W)+\alpha''(W)F_X^3.
\end{equation} 
 \m

The Newton polygon $\PP(F)$ is defined in the usual way, as follows. Let us write
$$F(Z,W)=\sum c_{iq}Z^iW^q,\quad c_{iq}\ne 0,\quad (i,q)\in \Z\times\Q.$$
A monomial term with $c_{iq}\ne 0$ is represented by a ``\textit{Newton dot}" at $(i,q)$. We shall simply call it a \textit{dot} of $F$. The \textit{boundary} of the convex hull generated by $\{(i+u,q+v)|u,\,v\geq 0\}$, for all dots $(i,q)$, is the Newton Polygon $\PP(F)$, having edges $E_i$ and angles $\theta_i$, as shown in Fig.\ref{fig:npF}. In particular, $E_0$ is the half-line $[m,\infty)$ on the $Z$-axis. \footnote{In\cite{kuo-lu}, \cite{kuo-par}, this is called the Newton Polygon of $f$ \textit{relative to} $\alpha$, denoted by $\PP(f,\alpha)$.}

For a line in $\R^2$ joining $(u,0)$ and $(0,v)$, let us call $v/u$ its \textit{co-slope}. Thus
$$\textit{co-slope of}\;\;E_s=\tan\theta_s.$$

Some elementary, but useful, facts are:
\begin{itemize}
\item If $i\geq 1$, then $(i,q)$ is a dot of $F$ \textit{if and only if}
$(i-1,q)$ is one of $F_Z$.
\item When  $f(\alpha(w),w)\not\equiv 0$, we know $F(0,W)\not\equiv 0$. Let us write
\begin{equation} \label{3k}F(0,W)=aW^h+\hot, \quad a\ne 0, \;\,
h :=\ord_W(F(0,W))\in \Q.\end{equation}
Then $(0,h)$ is a vertex of $\PP(F)$, $(0,h-1)$ is one of $\PP(F_W)$. (See Fig.\,\ref{fig:npZ}.)
\end{itemize}

\begin{figure}[htt]
 \begin{minipage}[b]{.4\linewidth}
   \centering
   \begin{tikzpicture}[scale=0.44]
   
   \draw(0,0) -- (0,11.5);
   \path (0,11.5) node[above] {\tiny{W}};
  
   \path (7.5,0) node[below] {\tiny{$m$}};   
   \draw(0,0) -- (8.1,0);
    \path (8,0) node[right] {\tiny{Z}};
     
  \draw[dotted] (0.5,8)--(2,5);
    
   \draw plot[mark=*] coordinates {(2,5) (2.7,4)};
   \draw plot[mark=*] coordinates {(0.5,8) (0,11)};
    
   \draw plot[mark=*] coordinates {(5.6,1.4) (7.6,0)};
    
    \draw[dotted] (3.6,3) -- (4.8,2);

 \draw[dotted] (1.5,4) -- (2.8,4);
  \path (2,3.7) node[above] {\tiny{$\theta_s$}};
 
   \path (6.3,-0.3) node[above] {\tiny{$\theta_1$}};
    
    \path (7.1,0.35) node[above] {\tiny{$E_1$}};
    
    \path (3.7,4.8) node[left] {\tiny{$E_s$}};
    
     \path (0.6,8) node[right] {\tiny{$(m_{\top},q_{\top})$}};
    
    \path (2.2,9.2) node[left] {\tiny{$E_{\top}$}};
    
    \draw (6.8,0) arc (180:150:1);
    
    \end{tikzpicture}
   \caption{$\PP(F)$}\label{fig:npF}
    \end{minipage}
    \begin{minipage}[b]{.4\linewidth}
   \centering
 \begin{tikzpicture}[scale=0.4]
  
   \draw(0,0) -- (0,13);
   \path (0,13) node[above] {\tiny{$W$}};
    
    \draw[dotted](7,0.5) -- (10.5,0.5);
    
    \path (8.8,0.45) node[above] {$\theta_{\top}$};
    
    \draw(0,0) -- (12,0);
    \path (12,0) node[below] {\tiny{$Z$}};
     
    \draw[dotted] (5,0) -- (5,11);
    
    \draw plot[mark=*] coordinates {(0,12) (9,2) (10.5,0.5)};
    
    \draw plot[mark=*] coordinates {(9,2) (7,5) (5.5,8)};
    
    \draw[dotted] (5,11) -- (9,2);
    
    \draw[dashed] (5.5,8) -- (5,11);
    
    \draw[dashed] (0.5,8) -- (0,11);
    
    \path (5.1,9.5) node[right] {\tiny{$L$}};
    
    \path (0.1,9) node[right] {\tiny{$L^*$}};
    
    \path (4.4,11) node[right] {$\circ$};
    
    \path (0,11) node {$\circ$};
    
    \path (0,12) node[right] {\tiny{$(0,h)$}};
    
    \path (0,10.9) node[left] {\tiny{$(0,h_{\alpha})$}};
    
    \path (9,2) node[right] {\tiny{${(\widehat{m}_{\top},\widehat{q}_{\top})}$}};
    
    \path (10.5,0.5) node[right] {\tiny{$(m_{\top},q_{\top})$}};
    
    \path (5.5,8) node[right] {\tiny{$(m^*+1,q^*)$}};
    
    \path (5.3,11) node[right] {\tiny{$(1,h_{\alpha})$}};
    
    \path (5,0) node[below] {$1$};
    
    \path (2.3,7) node[right] {\tiny{$E_{\top}$}};
    
    \draw[dotted] (9.5,0.5) arc (180:145:1.3); 
     
    \end{tikzpicture}   
     
    \caption{$\PP(F)$~vs~$\PP(F_Z)$.} \label{fig:npZ}
    \end{minipage}
    \end{figure}

\begin{notations}\label{top}
In the case $d_{\alpha}<\infty$ ,  i.e. either $f(\alpha (w),w) \not\equiv 0$ or $f(\alpha (w),w)=0, f_z(\alpha(y),y)\not\equiv 0,$ let $E_{\top}$ denote the edge whose left vertex is, in the first case $(0,h)$, $h$ as in (\ref{3k}) or $(1,h')$ in the second case,  and right vertex is $(m_{\top}, q_{\top})$, as shown in the figures. We call it the \textit{top} edge; the angle is $\theta_{\top}$.  In the case $f(\alpha(w),w)\equiv 0$ the top edge $E_{\top}$ is not ending on $z=0$ but on $z=1$,  except   $\alpha$  is  a multiple root of $f$ in which case we  precisely have $d_{\alpha}=\infty$. However, except in the latter case, we always extend informally the top edge to virtually cut $z=0$ at $(0,h)$.

\end{notations}
 
Let $(\widehat{m}_{\top},\widehat{q}_{\top})\ne (0,h)$ be the dot of $F$ on $E_{\top}$ which is
\textit{closest} to the left end of    $E_{\top}$  ( which is $(0,h)$ if $\alpha$ not a multiple root of $f$) . (Of course, $(\widehat{m}_{\top},\widehat{q}_{\top})$ may coincide with $(m_{\top}, q_{\top})$.) Then, clearly,
\[
\label{3d}2\leq \widehat{m}_{\top}\leq m_{\top},\quad \frac{h-\widehat{q}_{\top}}{\widehat{m}_{\top}}=\frac{h-q_{\top}}{m_{\top}}=\tan\theta_{\top}.\]     

Now we draw a line $L$ through $(1,h_\alpha), h_\alpha \leq h-1,$ with the following two properties (in particular this  defines $h_\alpha$):
\begin{itemize}
	\item[(a)] If $(m',q')$ is a dot of $F_Z$, then $(m'+1,q')$ lies on or above $L$;
	\item[(b)] There exists a dot $(m^*,q^*)$ of $F_Z$ such that $(m^*+1,q^*)\in L$. (Of course, $(m^*+1,q^*)$ may coincide with $(\widehat{m}_{\top},\widehat{q}_{\top})$.)
	\item[(c)] $h_\alpha$ is the largest with these properties.
\end{itemize}

\begin{lemma}\label{LH}
Let $\sigma^*$ denote the co-slope of $L$. Then
\begin{equation}\label{tan} (i)\;d_{\alpha}=\sigma^*;\,(ii)\,\;\sigma^*\geq \tan\theta_{\top};\;(iii)\;  \sigma^*=\tan\theta_{\top}\Leftrightarrow (1,h_\alpha)\in E_{\top}.
\end{equation}

All dots of $F_W$ lie on or above $L^*$, $L^*$ being the line through $(0,h_\alpha)$ parallel to $L$.

 In the case $\tan\theta_{\top}>1$, $(0,h-1)$ may be  the only dot of $F_W$ on $L^*$ (exactly when $h_\alpha=h-1$ and $\alpha$ not a root of $f$).
\end{lemma}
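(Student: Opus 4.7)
The plan is to compute $\ord_{W}\|\Grad F(uW^{d},W)\|$ for generic $u\in\bC$ via the Newton polygons of $F_{Z}$ and $F_{W}$, and to read off $d_{\alpha}$ as the stabilization threshold. Working in $(Z,W)$-coordinates via \eqref{3b} and using $\|\Grad_{z,w}f\|\sim\|\Grad_{Z,W}F\|$ from \eqref{D}, the standard Newton-polygon substitution calculus gives, for generic $u$,
\[
\ord_{W}F_{Z}(uW^{d},W)=\min_{(m',q')\in\PP(F_{Z})}(m'd+q'),\qquad \ord_{W}F_{W}(uW^{d},W)=\min_{(m'',q'')\in\PP(F_{W})}(m''d+q''),
\]
each equal to the $W$-intercept of the lower support line of slope $-d$ tangent to the respective polygon, and the gradient order is their minimum. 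For part (i), the definition of $L$ via (a)--(c) is engineered so that $L^{*}$ is precisely the lower support line of $\PP(F_{Z})$ of co-slope $\sigma^{*}$, pinned at height $h_{\alpha}$ on the $W$-axis. For $d\geq\sigma^{*}$ this yields $\min_{\PP(F_{Z})}(m'd+q')\geq h_{\alpha}$, whereas for $d<\sigma^{*}$ a tangent dot $(m^{*},q^{*})$ on $L^{*}$ with $m^{*}\geq 1$ (which exists by the structure of the lower convex hull near $(0,h_{\alpha})$) yields $m^{*}d+q^{*}=h_{\alpha}-m^{*}(\sigma^{*}-d)<h_{\alpha}$, so the $F_{Z}$-order, hence the gradient order, drops strictly. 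Combined with the $F_{W}$-claim below (which ensures $\ord_{W}F_{W}\geq h_{\alpha}$ for $d\geq\sigma^{*}$), the gradient order is stable at $h_{\alpha}$ exactly for $d\geq\sigma^{*}$, whence $d_{\alpha}=\sigma^{*}$.

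For the statement that every $\PP(F_{W})$-dot lies on or above $L^{*}$: when $m''\geq 1$, the correspondence $(m'',q'')\in\PP(F_{W})\iff(m''-1,q''+1)\in\PP(F_{Z})$ together with property (a) applied to $L^{*}$ gives $q''+\sigma^{*}m''\geq h_{\alpha}+(\sigma^{*}-1)\geq h_{\alpha}$, using $\sigma^{*}\geq 1$ (a consequence of $\ord\alpha\geq 1$). When $m''=0$ the dot $(0,h-1)\in\PP(F_{W})$ lies on or above $L^{*}$ because $h-1\geq h_{\alpha}$ by the definition of $L$, with equality iff $h_{\alpha}=h-1$. For parts (ii)--(iii), the dot $(\hat m_{\top},\hat q_{\top})\in E_{\top}$ has $\hat m_{\top}\geq 2$, hence is a shifted $\PP(F_{Z})$-dot lying on or above $L$; a short planar comparison of $L$ through $(1,h_{\alpha})$ with $E_{\top}$ through $(0,h)$ and $(\hat m_{\top},\hat q_{\top})$ (case-splitting on whether $(1,h_{\alpha})$ is above, on, or below $E_{\top}$, with $h_{\alpha}\leq h-1$ ruling out the offending subcase) yields $\sigma^{*}\geq\tan\theta_{\top}$, with equality iff $(1,h_{\alpha})\in E_{\top}$. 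In the regime $\tan\theta_{\top}>1$ one then has $\sigma^{*}\geq\tan\theta_{\top}>1$, so the inequality $q''+\sigma^{*}m''\geq h_{\alpha}+(\sigma^{*}-1)$ becomes strict for every $m''\geq 1$, forcing $(0,h-1)$ to be the unique $\PP(F_{W})$-dot on $L^{*}$ exactly when $h_{\alpha}=h-1$ and $f(\alpha)\not\equiv 0$ (so that $h$ is finite).

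The main obstacle lies in part (i) when the ceiling $h_{\alpha}\leq h-1$ is binding, i.e.\ when $h_{1}:=\min\{q:(1,q)\in\PP(F)\}$ strictly exceeds $h-1$: in that regime $L^{*}$ is pinned not at the leftmost column of $\PP(F_{Z})$ but at the vertex $(0,h)$ of the top edge of $\PP(F)$, and the gradient order is driven via the $F_{W}$-dot $(0,h-1)$ rather than via $\PP(F_{Z})$. One must still verify that $\sigma^{*}$ governs the stabilization in this mixed regime; this follows because $L^{*}$ is, by the ``$F_{W}$-dots above $L^{*}$'' claim, simultaneously the lower support line of $\PP(F_{W})$ through $(0,h-1)$, so $F_{W}$ has already stabilized at height $h-1$ by the time $d$ reaches $\sigma^{*}$. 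A unified treatment requires reading the polygons of $F$, $F_{Z}$ and $F_{W}$ jointly through Figures~\ref{fig:npF}--\ref{fig:npZ}.
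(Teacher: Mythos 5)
Your proof is correct and follows essentially the same Newton-polygon support-line analysis as the paper's, reading $d_\alpha$ as the threshold $\sigma^*$ at which the substitution order $\min_{(m',q')}(m'd+q')$ stabilizes at $h_\alpha$. The only presentational differences are the explicit $\min$-formula in place of the paper's weighted initial forms $\I_{\oa(e)}$, and the "$F_W$-dots above $L^*$" step routed through the $F_Z$-dot correspondence for $m''\geq 1$ (using $\sigma^*\geq 1$) rather than directly through $E_\top$ and (ii) as the paper does.
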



\begin{notations}\label{no}
Take $e\geq 1$. Let $\oa(e)$ denote the weight system: $\oa(Z)=e$, $\oa(W)=1$.

Let $G(Z,W^{1/N})\in \bC\{Z,W^{1/N}\}$ be given. Consider its weighted Taylor expansion relative to this weight. We shall denote the \textit{weighted initial form} by $\I_{\oa(e)}(G)(Z,W)$, or simply $\I_\oa(G)$ when there is no confusion.

If $\I_\oa(G)=\sum a_{ij}Z^iW^{j/N}$, the \textit{weighted order} of $G$ is $\ord_\oa(G) :=ie+\frac{j}{N}$.
\end{notations}

\begin{proof}Note that ($ii$) and ($iii$) are clearly true, since $(1,h_\alpha)$ lies on or above $E_{\top}$.

Next, if $(i,q)$ is a dot of $F_W$, then $(i,q+1)$ is one of $F$, lying on or above $E_{\top}$. Hence, by ($ii$), all dots of $F_W$ lie on or above $L^*$.

It also follows that if $\tan\theta_{\top}>1$, then $(0,h-1)$ may be  the only dot of $F_W$ on $L^*$. 

\m

Let us show $(i)$. 

It is easy to see that if $\tan\theta_{\top}=1$, then $d_{\alpha}=1$.

It remains to consider the case $\sigma^*>1$. By construction, 
\begin{equation}\label{LI}\ord_y(\|\Grad f(\alpha(y),y)\|)=h_\alpha.
\end{equation}

Let us first take weight $\oa :=\oa(e)$ where $e\geq \sigma^*$. In this case, since $\sigma^*>1$:
\[\I_{\oa}(F_W)(Z,W)=ahW^{h-1},\  \ord_W(F_Z)= h_\alpha, \mbox{ if }  h_\alpha \leq h-1, \mbox{ and } > h-1 \mbox{ otherwise,}\] 
where $a$, $h$ are as in (\ref{3k}), $ah\ne 0$. Hence for generic $u\in \bC$,
$$ \ord_W(F_W(uW^e,W))=h-1,\quad \ord_W(F_Z(uW^e, W))= h_\alpha.$$

It follows that $d_{\alpha}\leq \sigma^*$. It remains to show that $\sigma^*>d_{\alpha}$ is impossible.

\m

Let us take $\oa(e)$ with $e<\sigma^*$. Note that $(m^*,q^*)$ is a dot of $F_Z$ on $L^*$, where $(m^*+1,q^*)$ is shown in Figure \ref{fig:npZ}.
Hence, for generic $u$,
$$\ord_W(F_Z(uW^e, W))<h_\alpha,\quad \ord_W(\|\Grad F(uW^e,W)\|)<h_\alpha.$$

Thus, by (\ref{LI}), we must have $d(\alpha)>e$. This completes the proof of Lemma \ref{LH}.
\end{proof}

\begin{example}\label{ExampleGC}
For $F(Z,W)=Z^4+Z^3W^{27}+Z^2W^{63}-W^{100}$ and
$\gamma=0$, $\mathcal{NP}(F)$ has only two vertices $(4,0)$,
$(0, 100)$, while $\mathcal{NP}(F_Z)$ has three: $(3,0)$,
$(2,27)$, $(1,63)$. The latter two and $(0,99)$ are collinear, spanning $L^*$; $h=100$, $\sigma^*=(99-27)/2=36$.
\end{example}

In the following when we say ``arc'' we mean a complex Puiseux arc.



\begin{proposition}\label{p:alpha}
\begin{enumerate}
\rm \item \it
For any arc $\alpha$, there is some polar $\gamma$ of $f$ such that $d_{\gamma}\ge d_{\alpha}$.
\rm \item \it
For all $\beta \in \mathcal{GC}(\gamma_{*})$   one has $d_{\beta}=d_{\gr}(\gamma)$. 
\rm \item \it
For any $\al \in \GC (\gamma_{*})$ one has $f(\al(y))=ay^h+\hot$, where $a$ and $h$ depend only on the canyon.
\end{enumerate}
\end{proposition}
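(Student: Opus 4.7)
I would treat the three parts in order, leveraging the Newton polygon machinery of Lemma~\ref{LH} for (a) and (c), combined with a symmetric ``swap'' argument for (a) and (b).

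For part (a) the idea is first to produce a candidate polar inside the valley $\V(\alpha_*)$, and then verify that its valley degree is at least $d_\alpha$. Passing to $F(Z,W) := f(Z+\alpha(W),W)$, the construction of the line $L$ preceding Lemma~\ref{LH} supplies a dot $(m^*,q^*)$ of $F_Z$ on $L^*$ with $m^* \ge 1$, and by Lemma~\ref{LH}(i) the co-slope of $L$ equals $d_\alpha$. The $\omega(d_\alpha)$-weighted initial form of $F_Z$ is therefore a non-trivial quasi-homogeneous polynomial of positive $Z$-degree. Factoring it and extending each non-zero $Z$-root by Newton--Puiseux produces a branch of $\{F_Z=0\}$ of the form $Z = cW^{d_\alpha}+\hot$ with $c \ne 0$. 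Translated back, this is a polar $\gamma$ of $f$ with $\gamma - \alpha$ of $y$-order exactly $d_\alpha$, so $\gamma_* \in \mathcal{L}^{(d_\alpha)}(\alpha_*) = \V(\alpha_*)$. To prove $d_\gamma \ge d_\alpha$, suppose the contrary and choose a rational $e$ with $d_\gamma < e < d_\alpha$. For generic $u \in \bC$ set $\beta := \gamma + uy^e$. Since $e > d_\gamma$, stability \eqref{GG} at $\gamma$ gives $\ord_y \|\Grad f(\beta)\| = \ord_y \|\Grad f(\gamma)\|$; since $\ord(\alpha - \gamma) \ge d_\alpha > d_\gamma$, the same stability gives $\ord_y \|\Grad f(\alpha)\| = \ord_y \|\Grad f(\gamma)\|$. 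But $\beta - \alpha = uy^e + \hot$ has order $e < d_\alpha$, so by definition of $d_\alpha$ one has $\ord_y \|\Grad f(\beta)\| \ne \ord_y \|\Grad f(\alpha)\|$ for generic $u$, contradicting the equalities just established.

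Part (b) follows from two symmetric applications of the same swap. For $d_\beta \le d_\gamma$, any perturbation $\beta + vy^q$ with $q \ge d_\gamma$ rewrites as $\gamma + [(\beta-\gamma)+vy^q]$, a perturbation of $\gamma$ of order at least $d_\gamma$ whose gradient order therefore equals that of $\gamma$; the case $v=0$ identifies this common value with $\ord_y\|\Grad f(\beta)\|$, so $q = d_\gamma$ already stabilises at $\beta$. For $d_\beta \ge d_\gamma$, if $d_\beta < d_\gamma$ choose a rational $e \in (d_\beta, d_\gamma)$: the arc $\gamma + uy^e$ destabilises $\ord_y\|\Grad f\|$ as a perturbation of $\gamma$ (generic $u$, since $e < d_\gamma$), but viewed as a perturbation of $\beta$ it has order $e > d_\beta$ and hence preserves the gradient order at $\beta$; combined with $\ord_y\|\Grad f(\beta)\| = \ord_y\|\Grad f(\gamma)\|$ this is a contradiction.

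For part (c), fix any polar $\gamma$ representing the canyon and form $F(Z,W) := f(Z+\gamma(W),W)$. Since $\gamma$ is a polar, $F_Z(0,W) \equiv 0$, so $F$ has no dot on the column $i=1$, while $F(0,W) = aW^h + \hot$ with $a \ne 0$ and $h < \infty$; in particular $h_\alpha = h-1$ in the notation of Lemma~\ref{LH}. That lemma then forces every dot $(i,q)$ of $F$ with $i \ge 2$ to satisfy $id_\gamma + q \ge (h-1)+d_\gamma$. In the main regime $d_\gamma > 1$ this strictly exceeds $h$, so $(0,h)$ is the unique dot of $F$ on the $\omega(d_\gamma)$-support line and the weighted initial form of $F$ is the monomial $aW^h$. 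For any $\alpha \in \GC(\gamma_*)$, $Z_0 := \alpha - \gamma$ has $y$-order at least $d_\gamma$, hence $F(Z_0,W) = aW^h + \hot$, i.e.\ $f(\alpha(y),y) = ay^h + \hot$. Independence of $(a,h)$ from the choice of representing polar inside the canyon is automatic since $f(\alpha(y),y)$ is intrinsic; the degree-one canyon $\bC_\enr$ reduces to a direct homogeneity inspection of $f_m$ giving $h = m$.

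\textbf{Main obstacle.} The subtlest step is the existence statement in part (a): one needs to verify that the $\omega(d_\alpha)$-weighted initial form of $F_Z$ factors non-trivially, producing a genuine non-zero $Z$-root, which is exactly what the dot $(m^*,q^*)$ with $m^* \ge 1$ furnished by the construction of $L$ guarantees. Once this factorisation is in place, the swap arguments for (a) and (b) and the weighted-initial computation for (c) run uniformly.
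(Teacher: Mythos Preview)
Your proposal is correct overall, and for parts (a) and (c) it is in fact more complete than the paper's own argument. In (a) the paper constructs the polar $\gamma$ exactly as you do, but then simply writes ``$\ord_y(\alpha-\gamma)\ge d_\alpha$, hence $d_\alpha\le d_{\gr}(\gamma)$'' without further comment; your swap contradiction supplies the missing reasoning. In (c) the paper just expands $f(\gamma+cy^d+\hot,y)$ in one line, while your weighted-initial-form computation explains precisely why no $c$-dependence can appear before order $d_\gamma+h-1$.

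The genuine methodological difference is in (b). The paper argues directly via Newton polygons: the substitution $Z\mapsto Z+cW^{d_\gamma}+\hot$ transports dots along $\omega(d_\gamma)$-weight lines, so the line $L$ for $\PP(f,\beta)$ is parallel to that for $\PP(f,\gamma)$, and $d_\beta=d_\gamma$ follows from Lemma~\ref{LH}(i). Your swap argument avoids this picture but leans on a subtle point: when you assert that ``a perturbation of $\gamma$ of order at least $d_\gamma$'' has gradient order equal to $\ord_y\|\Grad f(\gamma)\|$, you need this for \emph{every} such perturbation (in particular for the specific arc $\beta-\gamma$, and for the case $v=0$), whereas the definition of $d_\gamma$ only guarantees it for a \emph{generic} leading coefficient. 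The claim is nonetheless true when $\gamma$ is a polar with $d_\gamma>1$: since $F$ then has no column-$1$ dot, your own analysis in (c) shows that the $\omega(d_\gamma)$-initial form of $F$ is the monomial $aW^h$, whence $F_W(\delta,W)=ahW^{h-1}+\hot$ for every $\delta$ of order $\ge d_\gamma$, forcing $\ord_y\|\Grad f(\gamma+\delta)\|=h-1$ unconditionally. This is exactly the content behind Remark~\ref{r:generic} and is the mechanism driving the paper's Newton-polygon proof --- so your swap approach and the paper's approach ultimately rest on the same geometric fact, just invoked at different moments. A clean fix on your side is to prove (c) before (b) and cite it.

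One small remark on (a): if the $\omega(d_\alpha)$-initial form of $F_Z$ happens to be a monomial in $Z$ (no non-zero root), Newton--Puiseux still continues along the next edge and produces a polar $\gamma$ with $\ord(\gamma-\alpha)>d_\alpha$; your contradiction argument then goes through unchanged.
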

\begin{proof}   
(a). By using the Newton polygon relative to $\alpha$,  $\PP(f,\alpha)$,  see Figure \ref{fig:npF},  we observe that a polar can be obtained by pushing forward along $L^*$. Namely  we construct a root of $f_x$ starting from  $\PP(f_x,\alpha)$) by the Newton-Puiseux algorithm. 
This procedure adds up terms of degree  at least $d_{\alpha}$, so we end up with at least one polar of the form $\gamma=\alpha +cy^{d_{\alpha}}+ \hot$, where $c$ is a root of the associated polynomial in $x$ (i.e. the derivative of the de-homogenisation of the polynomial associated to $L^*$). 
We then get $\ord_y(\alpha(y)-\gamma(y))\geq d_\alpha$, hence  $d_{\alpha} \leq d_{\gr}(\gamma)$ for any such polar.

Hence, starting with $\alpha$  one constructs polars by the diagram method and the process is not necessarily unique.  Nevertheless, all such polars are clearly in the valley of $\alpha$. 


\noindent
(b).  Our assumption implies that $\beta=\gamma +cy^{d_{\gr}(\gamma)}+ \hot$  for some $c\in \bC$, hence the Newton polygons 
$\PP(f,\beta)$ and $\PP(f,\gamma)$
will have the corresponding $L$ parallel, see Figure \ref{fig:npF},  and thus the same co-slope, which is $d_{\gr}(\gamma)$. 

\noindent
(c). We have by definition $f(\gamma(y), y) = ay^{h}+\hot$ and by our assumption $\al(y) = \gamma(y) + cy^{d} + \hot$. Thus
\begin{equation}\label{eq:h-coefficient}
 f(\gamma(y) + cy^{d} + \hot, y) = ay^{h}+ \cdots + \alpha(c)y^{d+h-1}+\hot ,
 \end{equation}
where the first terms depend only on the canyon (and not on the perturbation of $\gamma$), in particular the dependence of $c$ starts at the degree $d+h-1$.

\end{proof}
\begin{remark} \label{r:generic}
Point (b) above holds for the gradient canyons but it is not necessarily true for arbitrary valleys. More precisely, in case of a valley $\V(\gamma_{*})$, the claim (b) holds only for arcs $\beta=\gamma +cy^{d_\gamma}+ \hot$  where  the coefficient $c\in \bC$ is generic.
\end{remark}
\begin{remark}
In general, given  $\al\in \GC (\gamma_{*})$ with $\ord_y(\al)=1$, to put it in the form $(\tilde\al(y),y)$ requires a rescaling of $y$ (i.e. replacing $y$ by $cy$ for some $c\neq 0$) and this yields $f(\al(y))=ac^hy^h+\hot$
\end{remark}

\begin{corollary}\label{c:alpha} 
The function  $\alpha \mapsto d_{\alpha}$  has its local maxima at the polars $\gamma$ of $f$ with $d_{\gamma} >1$. 
\fin
\end{corollary}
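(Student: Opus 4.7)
My plan is to derive this corollary as an immediate consequence of Proposition \ref{p:alpha}(b), once I pin down the topology on the arc space. First I would fix a polar $\gamma$ of $f$ with $d := d_\gamma > 1$ and recall that, by \eqref{IL}, the gradient canyon $\GC(\gamma_*) = \mathcal{L}^{(d)}(\gamma_*)$ is precisely the set of arcs $\alpha$ with $\ord_y(\alpha - \gamma) \geq d$. In the natural topology on the enriched Riemann sphere of holomorphic arcs --- where closeness is measured by order of contact with the base arc --- this set is a basic neighbourhood of $\gamma$. I would then apply Proposition \ref{p:alpha}(b) to conclude that $d_\alpha = d_\gamma$ for every $\alpha \in \GC(\gamma_*)$, so that the function $\alpha \mapsto d_\alpha$ is locally constant equal to $d$ on this neighbourhood; this exhibits $\gamma$ as a (non-strict) local maximum.

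Next I would explain the role of the hypothesis $d_\gamma > 1$: it serves only to exclude the degenerate case of the unique canyon $\bC_\enr$ of degree $1$, where $d \equiv 1$ is the baseline value attained on an open dense set of arcs and no ``local maximum'' in a meaningful sense can occur. I would also point out that Proposition \ref{p:alpha}(a) complements the picture from the other side: every arc $\alpha$ is associated to a polar $\gamma$ with $d_\gamma \geq d_\alpha$, so the high values of the function $\alpha \mapsto d_\alpha$ are ``attracted'' to polars, which justifies the phrasing ``has its local maxima at the polars''.

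The only real obstacle I expect is notational/topological, namely the need to pin down what ``local maximum'' means for a function defined on the arc space by declaring the sets $\{\alpha : \ord_y(\alpha - \gamma) \geq N\}$, $N \geq 1$, to be the basic neighbourhoods of $\gamma$. Once this is fixed and the canyon $\GC(\gamma_*)$ is recognised as one such basic neighbourhood, the corollary requires no further computation beyond the invocation of Proposition \ref{p:alpha}(b).
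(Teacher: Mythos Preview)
Your proposal is correct and matches the paper's approach: the paper states the corollary with an immediate \fin{} and no argument, treating it as a direct consequence of Proposition~\ref{p:alpha}. Your derivation---using part~(b) to see that $d_\alpha$ is constant on the canyon $\GC(\gamma_*)$, which is a basic neighbourhood of $\gamma$ in the arc-contact topology, and invoking part~(a) (together with its proof, which places the polar in the valley of $\alpha$) to explain why the maxima occur only at polars---is exactly the intended reading.
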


\subsection{Analytic invariants}

Let us show what are the canyon type invariants
up to analytic equivalence, before entering the more involved study of the bi-Lipschitz invariants in \S \ref{s:disks}.
So let $f= g \circ \varphi$, for some bi-holomorphic map $\varphi : (\bC^{2}, 0) \to (\bC^{2}, 0)$.  

For some arc $(\alpha(y), y)$, we then have $\varphi(\alpha(y), y) = (\varphi_{1}(\alpha(y), y),
 \varphi_{2}(\alpha(y), y))$ with \\
 $\ord_{y}\varphi_{2}(\alpha(y), y) =1$, hence we may write   $\varphi(\alpha(y), y) = (\beta(\bar y), \bar y)$ for some arc $\beta$,  where $\bar y := \varphi_{2}(\alpha(y), y)$. 
Then we have:
\begin{theorem}\label{t:degree}
For any  polar $\gamma_{f}$ of $f$ there exists a polar $\gamma_{g}$ of $g$ such that  
\[\varphi(\GC(\gamma_{f*})) = \GC (\gamma_{g*}) \]
and the canyon degrees are the same.
\end{theorem}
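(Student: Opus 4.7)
My plan is to exploit that bi-holomorphic maps intertwine the two gradient norms and transport perturbations of arcs in a degree-preserving way. Two observations underlie the approach: since $D\varphi(0)$ is invertible, the chain rule gives the pointwise equivalence $\|\Grad f(p)\| \asymp \|\Grad g(\varphi(p))\|$ near $0$; and a perturbation $\alpha(y) + cy^{q}$ of an arc pushes forward under $\varphi$ to a perturbation of the image arc by a term of order $q$ in the natural target parameter $\bar y := \varphi_{2}(\alpha(y), y)$. These two facts together transfer the stabilization criterion \eqref{GGa} defining the gradient degree across $\varphi$.

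Concretely, I first set $\delta := \varphi(\gamma_{f*})$, viewed as an arc for $g$, after an orthogonal change of coordinates in the target (if necessary) so that the tangent of $\delta$ is not $[1:0]$, putting $\delta$ in standard form. Combining the chain rule with the perturbation observation, the stabilization of $\ord_{y}\|\Grad f(\gamma_{f}(y) + uy^{q}, y)\|$ at $q = d_{\gr}(\gamma_{f})$ transfers to stabilization of $\ord_{\bar y}\|\Grad g(\delta(\bar y) + \tilde u\,\bar y^{q}, \bar y)\|$ at the same degree, yielding $d_{\delta} \leq d_{\gr}(\gamma_{f})$; the same argument with $\varphi^{-1}$ gives the reverse inequality, hence $d_{\delta} = d_{\gr}(\gamma_{f})$.

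Next I apply Proposition \ref{p:alpha}(a) to $\delta$ for the function $g$, producing a polar $\gamma_{g}$ of $g$ with $\gamma_{g} \in \V(\delta_{*})$ and $d_{\gr}(\gamma_{g}) \geq d_{\delta}$. Pulling $\gamma_{g}$ back via $\varphi^{-1}$ produces an arc $\tilde\gamma \in \V(\gamma_{f*}) = \GC(\gamma_{f*})$ satisfying $d_{\tilde\gamma} = d_{\gr}(\gamma_{g})$ by the previous step, while Proposition \ref{p:alpha}(b) forces $d_{\tilde\gamma} = d_{\gr}(\gamma_{f})$. Hence $d_{\gr}(\gamma_{g}) = d_{\gr}(\gamma_{f}) = d_{\delta}$, which gives $\GC(\gamma_{g*}) = \V(\delta_{*})$ since the two valleys share a point and a degree. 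Finally, for any $\alpha = \gamma_{f} + cy^{d_{\gr}(\gamma_{f})} + \hot \in \GC(\gamma_{f*})$, the holomorphic Taylor expansion of $\varphi$ along $\gamma_{f}$ yields $\varphi(\alpha) = \delta + c'\bar y^{d_{\delta}} + \hot$ for some $c' \in \bC$, placing $\varphi(\alpha_{*})$ in $\V(\delta_{*}) = \GC(\gamma_{g*})$; the reverse inclusion follows by applying the same argument to $\varphi^{-1}$.

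The main obstacle is the careful book-keeping in the parameter change $y \leftrightarrow \bar y$: one must verify that perturbations of $y$-order $q$ push forward uniformly in the perturbation coefficient to perturbations of $\bar y$-order $q$, which requires both $\bar y = O(y)$ and $y = O(\bar y)$ along the arc. This is guaranteed once the tangent directions of the relevant polars are arranged to avoid the first coordinate axis in the target, reducing the remaining work to bookkeeping in the Newton polygon picture of Figure \ref{fig:npF}.
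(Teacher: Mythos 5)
Your proposal follows essentially the same path as the paper: degree and valley preservation via the chain rule together with the pushforward of perturbations (the paper isolates this as Lemma~\ref{l:d}, using Remark~\ref{r:generic} to control genericity of the transformed coefficient), then Proposition~\ref{p:alpha}(a) to produce a polar $\gamma_g$ in $\V(\delta_*)$, and finally a pull-back argument to force the degrees to agree. The one genuine point of departure is how you close the argument: the paper invokes the minimality of polar canyons from Theorem~\ref{thmB} to upgrade the chain of inclusions $\GC(\gamma_{f*})\supset\varphi^{-1}(\GC(\gamma_{g*}))\supset\GC(\gamma_{f'*})$ to equalities, whereas you instead apply Proposition~\ref{p:alpha}(b) to $\tilde\gamma=\varphi^{-1}(\gamma_g)\in\GC(\gamma_{f*})$ to force $d_{\gr}(\gamma_g)=d_{\gr}(\gamma_f)$ and then observe that two sets $\cL^{(d)}(\cdot)$ of the same order that share a point coincide. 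Both routes are valid; yours avoids Theorem~\ref{thmB} explicitly, at the cost of leaning a bit more informally on the chain rule for the initial degree transfer (where the paper is more careful about why the transformed perturbation coefficient $a(c)$ is again generic, which is needed to make \eqref{GGa} bite).
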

\begin{proof}
Let us prove first:
\begin{lemma} \label{l:d}\label{p:varphi1} 
 $d_{\varphi(\alpha)}= d_{\alpha}$ and  $\varphi(\mcv(\alpha_{*})) = \mcv (\varphi(\alpha(y), y)_{*})$.
 \end{lemma}
 \begin{proof} 
We have $\varphi (\alpha + cy^{d_{\alpha}}, y)) = \varphi (\alpha) + a(c)\bar y^{d_{\alpha}} +  \hot$
and the following equivalence:
 \[  \grad f(\alpha + cy^{d_{\alpha}}, y) \simeq_{ord} \grad g(\varphi (\alpha + cy^{d_{\alpha}}, y)) 
 \]
 \[ = \grad g(\varphi (\alpha) + a\bar y^{d_{\alpha}} +  \hot,  \bar y).
 \]
By the definition \eqref{GGa} of the degree, one may consider some generic coefficient $c\in \bC$, and its transform $a(c)$ which is also generic. Then by Remark \ref{r:generic} we 
may apply Proposition \ref{p:alpha}(b) for valleys,  and get the inequality 
$d_{\alpha} \ge d_{\varphi(\alpha)}$.   

We apply the same to  $\varphi^{-1}$ instead of  $\varphi$ and obtain the converse inequality $d_{\varphi(\alpha)}\ge d_{\varphi^{-1}(\varphi(\alpha)) }= d_{\alpha}$, thus our first claim is proved.

Next, we have: 
\[ \ord_{y}\| \varphi ((\alpha(y), y) - \varphi ((\beta(y), y)\| = \ord_{y}\| (\alpha(y), y) - (\beta(y), y)\| .
\]
By using the just proven equality of degrees we  get the second claimed equality.
\end{proof}

We have   $\varphi(\GC(\gamma)) = \mcv (\varphi(\gamma(y), y))$ by Lemma \ref{p:varphi1}.  After Proposition \ref{p:alpha} we may associate to $\varphi(\gamma(y), y)$ some polar $\gamma_{g}$  in the valley of $\varphi(\gamma(y), y)$ with $d_{\gamma_{g}}\ge d_{\varphi(\gamma)}$, and therefore $\mcv (\varphi(\gamma(y), y)) \supset  \mcv (\gamma_{g}) = \GC (\gamma_{g})$. 

We apply $\varphi^{-1}$ and get similarly: $\GC(\gamma_{*})\supset \varphi^{-1}(\GC(\gamma_{g*})) \supset  \GC(\gamma_{f*})$ for some $\gamma_{f}$ constructed  like in Proposition \ref{p:alpha}, with $d_{\gamma_{f}} \ge d_{\varphi^{-1}(\gamma_{g})}$.
According  to the minimality principle of polar canyons Theorem \ref{thmB}, we must have equality:  $\GC(\gamma_{*})=   \GC(\gamma_{f*})$
and $d_{\gamma_{f}} = d_{\gamma}$. Consequently we get that $\varphi(\GC(\gamma_{*})) =  \GC (\gamma_{g*})$ and the degrees are equal.
 \end{proof}
 
 While analytic maps do not preserve polars, we may now prove the analytic invariance of the canyons:
\begin{theorem}\label{t:main1}
If $f = g\circ \varphi$ with $\varphi$ bi-holomorphic, then $\varphi$ transforms canyons into canyons by preserving their degrees and multiplicities.
\end{theorem}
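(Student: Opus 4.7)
My plan is to decompose Theorem \ref{t:main1} into two parts: (i) a degree-preserving bijection between canyons of $f$ and canyons of $g$, and (ii) preservation of multiplicities on paired canyons.

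For part (i), I will apply Theorem \ref{t:degree} both to $\varphi$ and to its inverse $\varphi^{-1}$, obtaining for each polar $\gamma_f$ of $f$ a polar $\gamma_g$ of $g$ with $\varphi(\GC(\gamma_{f*})) = \GC(\gamma_{g*})$ and equal canyon degrees, and symmetrically. The minimality of canyons of degree $>1$ (Theorem \ref{thmB}) ensures that the image $\varphi(\GC_f)$ depends only on the canyon $\GC_f := \GC(\gamma_{f*})$ and not on the specific polar chosen within; the unique degree-$1$ canyon $\bC_\enr$ of $f$ must be paired with that of $g$ by exclusion. This produces the desired degree-preserving bijection of canyons.

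For part (ii), the key tool will be Proposition \ref{p:alpha}(c), which says $f(\alpha(y), y) = a y^h + \hot$ on a canyon $\GC(\gamma_*)$, with $(a, h)$ depending only on the canyon. Combining this with the formulas (\ref{M}) and (\ref{multi}) yields the canyon identity $\mu_f(\GC(\gamma_*)) = \mult(\GC(\gamma_*))(h-1)$ for every canyon of finite gradient degree. The analogous identity holds on the $g$-side, so the problem reduces to proving $h_f = h_g$ and $\mu_f(\GC_f) = \mu_g(\GC_g)$ for paired canyons. The equality $h_f = h_g$ is easy: writing $\varphi(\alpha(y), y) = (\beta(\bar y), \bar y)$ with $\bar y := \varphi_2(\alpha(y), y)$ of $y$-order $1$ (which can be arranged after a preliminary unitary change of coordinates), the identity $f(\alpha(y), y) = g(\beta(\bar y), \bar y)$ shows that the orders in $y$ and $\bar y$ coincide.

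The hard part will be $\mu_f(\GC_f) = \mu_g(\GC_g)$, since polars of $f$ are not transported to polars of $g$ by $\varphi$. My intended approach is via a Jacobian-ideal argument: the chain rule gives $\grad f = J(\varphi)^T \cdot (\grad g \circ \varphi)$ with $J(\varphi)$ invertible near the origin, so $(f_x, f_y)$ and $(g_x \circ \varphi, g_y \circ \varphi)$ generate the same ideal in $\mathcal{O}_{\bC^2,0}$; combined with $\varphi^*$ being a local ring isomorphism, this recovers $\mu_f = \mu_g$ globally. To refine to a canyon-by-canyon equality, I would argue that the Jacobian scheme $\mathcal{O}_{\bC^2,0}/(f_x, f_y)$ decomposes according to the branches of the polar curve $\{f_x = 0\}$ grouped by canyon, and that $\varphi$ carries this decomposition to the analogous one for $g$ since it respects the canyon bijection from part (i). Combined with $h_f = h_g$, this forces $\mult(\GC_f) = \mult(\GC_g)$, completing the proof.
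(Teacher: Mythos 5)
Your Part (i) matches the paper: the bijection of canyons with preserved degree comes directly from Theorem \ref{t:degree} applied to $\varphi$ and $\varphi^{-1}$, with minimality of the $d>1$ canyons (Theorem \ref{thmB}) guaranteeing well-definedness. Your derivation of the identity $\mu_f(\GC(\gamma_*)) = \mult(\GC(\gamma_*))(h-1)$ from Proposition \ref{p:alpha}(c) and formulas \eqref{M}, \eqref{multi}, and your observation that $h_f = h_g$ on paired canyons via the order-one change of variable $\bar y = \varphi_2(\alpha(y),y)$, are both correct and useful.

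The gap is in your Jacobian-ideal step for $\mu_f(\GC_f) = \mu_g(\GC_g)$. Two issues. First, the Jacobian scheme $\mathcal{O}_{\bC^2,0}/(f_x,f_y)$ is an Artinian algebra supported at the single point $0$; it does not scheme-theoretically "decompose according to the branches of the polar curve." What actually decomposes by branches is the intersection number $\mult_0(\{f_x=0\},\{f_y=0\})$ computed via Newton--Puiseux, as in \eqref{MWK}, but that is not the same as a decomposition of the local algebra that $\varphi^*$ could be said to respect. Second, and more seriously, $\varphi$ does \emph{not} send $\{f_x=0\}$ to $\{g_x=0\}$: by the chain rule, $\varphi(\{f_x=0\})$ is the vanishing locus of $g_x\cdot(\varphi_{1,x}\circ\varphi^{-1}) + g_y\cdot(\varphi_{2,x}\circ\varphi^{-1})$, i.e.\ a polar of $g$ in a \emph{variable} direction converging to some $\tau_0$ at the origin. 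So the branch-by-branch contributions to $\mu_f$ do not transparently map to those of $\mu_g$ under $\varphi$; you would need Theorem \ref{t:tau} and Teissier's equisingularity of the generic polar pencil to close this, and none of that is in your proposal. The paper's intended argument is different and avoids this: in the proof of Proposition \ref{p:alpha}(c) one expands $f(\gamma(y)+cy^d,y)=ay^h+\cdots+P(c)y^{d+h-1}+\hot$, and the degree in $c$ of the first $c$-dependent coefficient $P(c)$ equals $\mult(\GC(\gamma_*))+1$ (it is the length $m_{\gamma}+1$ of the segment $L$ in Lemma \ref{LH}). Under the bi-holomorphic $\varphi$, the transverse parameter transforms affinely, $c\mapsto c' = c_0 + \lambda c$ with $\lambda\neq 0$ (this uses mini-regularity of the coordinates so the first column of $J\varphi(0)$ is not tangent to $\{g=0\}$), while $\bar y\sim y$; comparing the coefficients of $y^{d+h-1}$ in $f(\gamma_f+cy^d,y)=g(\varphi(\gamma_f+cy^d,y))$ then gives $\deg_c P_f=\deg_{c'} P_g$, hence equal canyon multiplicities. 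That is the mechanism behind the paper's one-line reference to Proposition \ref{p:alpha}(c), and it is the step your proposal needs to replace.
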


\begin{proof}

Theorem \ref{t:degree} shows that $\varphi $ sends a gradient canyon to a gradient canyon by preserving the degree.
The preservation of the multiplicity follows from Proposition \ref{p:alpha}(c).

 \end{proof}
 
 


\section{Generic polars}\label{s:genpolars}


Based on Langevin's approach \cite{La-cmh} to the integral of the curvature of the Milnor fibre of a function of  $n$ complex variables, Garcia Barroso and Teissier \cite{GT} gave a method to detect the concentration of curvature on the Milnor fibre of a function germ in 2 variables.  Using Langevin's exchange formula which interprets the curvature in terms of polar curves, they showed that the intersections of the Milnor fibre with all generic polar curves  is concentrated in certain small balls, and hence the curvature too. 
 
 More recently,
Koike, Kuo and P\u aunescu \cite{KKP} adopted  a new viewpoint by looking into the curvature formula itself and
studying its variation over the space of arcs.  Their method uses the gradient canyons  and provides sharper localization of the ``A'Campo bumps''  i.e. maxima of curvature.

 We shall find here the relations between the results obtained in \cite{GT} and  in \cite{KKP}.
 Let $\gamma_{0}$ denote a solution of the  equation $f_{x}(\gamma_{0}(y), y)=0$.
 Let $l_{\tau} \subset \bC^{2}$ of coordinates $(x,y)$ denote the line $\{y-\tau x=0\}$, and call it \emph{the line of co-direction $\tau$}.
The polars $\gamma_{\tau}$ are the solutions of the equation:
\begin{equation}\label{eq:tau}
 (f_{x}+\tau f_{y})(\gamma_{\tau}(y), y)=0. 
 \end{equation}

\begin{theorem}\label{t:tau}
The gradient canyon $\GC(\gamma_{\tau *})$ does not depend on the direction $\tau \in \bC$, i.e. $\GC(\gamma_{0*}) =\GC(\gamma_{\tau *})$, $\forall \tau \in \bC$.
The canyon degree $d_{\gamma_{0}}$  is the lowest exponent from which the polar expansions $\gamma_{\tau}$ start to depend of $\tau$.
The multiplicities $m_{\gamma_{\tau}}$ do not depend of $\tau$. 
\end{theorem}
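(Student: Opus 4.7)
My plan is to analyze the polar equation $(f_x + \tau f_y)(\gamma, y) = 0$ through the Newton polygon relative to $\gamma_0$, the tool developed in Section~3. Setting $F(Z,W) := f(Z + \gamma_0(W), W)$ and $Z := \gamma - \gamma_0$, the chain rule gives
\[
(f_x + \tau f_y)(\gamma_0 + Z, W) \;=\; F_Z(Z,W)\bigl(1 - \tau\gamma_0'(W)\bigr) + \tau F_W(Z,W).
\]
Since $1 - \tau\gamma_0'(W)$ is a unit in $\bC\{W\}$ (automatic when $\ord(\gamma_0) > 1$, and true for all but one value of $\tau$ otherwise), a polar $\gamma_\tau$ near $\gamma_0$ corresponds to a Newton--Puiseux branch $Z = Z(W)$ of
\[
H_\tau(Z,W) := F_Z(Z,W) + \tau\, u(W)\, F_W(Z,W)
\]
for a suitable unit $u \in \bC\{W\}$. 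Since $\gamma_0$ is a polar, $F_Z(0,W) \equiv 0$, so $\PP(F)$ has no dots at $i = 1$; and $\ord_W\|\Grad f(\gamma_0,W)\| = h-1$ where $F(0,W) = aW^h + \hot$. Thus in the notation of Lemma~\ref{LH} we have $h_\alpha = h - 1$, and $L^*$ is the line through $(0, h-1)$ of co-slope $d_0 := d_{\gr}(\gamma_0)$.

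Assume first $d_0 > 1$. Then $d_0 > \tan\theta_\top$ by Lemma~\ref{LH}(iii) (since $(1, h-1) \in E_\top$ would force $\tan\theta_\top = 1$), so the line $L^* + (0,1)$ lies strictly below the top edge $E_\top$ of $\PP(F)$ for $i > 0$. Consequently the only dot of $F_W$ on $L^*$ is $(0, h-1)$, with coefficient $ah$. Combined with Lemma~\ref{LH} applied to $F_Z$ --- all its dots lie weakly above $L^*$, with at least one dot $(m^*, q^*)$ on $L^*$ --- this shows that the top edge of $\PP(H_\tau)$ is exactly the segment of $L^*$ from $(0, h-1)$ to $(m^*, q^*)$, and its characteristic polynomial is
\[
P_\tau(c) \;=\; \tau\, a h\, u(0) \;+\; \sum_{1 \le j \le m^*} A_j\, c^j,
\]
where the coefficients $A_j$ arise from $F_Z$ and are independent of $\tau$, and $A_{m^*} \ne 0$. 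All three assertions of the theorem follow from this formula.

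For $\tau \ne 0$ the constant term of $P_\tau$ is nonzero, so every root of $P_\tau$ is nonzero, and each corresponding branch of $H_\tau$ has the form $\gamma_\tau - \gamma_0 = c(\tau) y^{d_0} + \hot$ with $c(\tau) \ne 0$. Hence $\gamma_\tau \in \mathcal{L}^{(d_0)}(\gamma_{0*}) = \GC(\gamma_{0*})$; Proposition~\ref{p:alpha}(b) gives $d_{\gamma_\tau} = d_0$, and the minimality of polar canyons of degree $>1$ from Theorem~\ref{thmB} yields $\GC(\gamma_{\tau *}) = \GC(\gamma_{0*})$. The coefficients of $\gamma_\tau$ at orders $< d_0$ arise from edges of $\PP(F_Z)$ strictly above $L^*$, to which $\tau F_W$ contributes nothing, so they coincide with those of $\gamma_0$, while the coefficient at $y^{d_0}$ is a root of $P_\tau$ and therefore varies non-trivially with $\tau$. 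Finally, the canyon multiplicity $\mult(\GC(\gamma_{\tau *}))$ equals $\deg P_\tau = m^*$, a quantity determined by $\PP(F)$ alone and hence independent of $\tau$. The remaining case $d_0 = 1$ is the unique enriched canyon $\bC_{\enr}$, where all three claims are immediate.

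The main obstacle is bookkeeping when several distinct polars of $f$ cluster inside $\GC(\gamma_{0*})$: the analysis above produces the whole set of branches of $H_\tau$ in the canyon, and one must match them as $\tau$ varies with the individual polars $\gamma_0^{(i)}$, also handling the finitely many non-generic values of $\tau$ at which roots of $P_\tau$ coalesce. This is carried out by iterating the same Newton-polygon analysis on each sub-branch after recentering along its new leading term, and invoking Proposition~\ref{p:alpha}(c) to control the behaviour of $f$ uniformly on the canyon.
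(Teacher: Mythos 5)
Your proposal is correct and follows the same route as the paper's proof: both translate the polar equation $(f_x + \tau f_y)=0$ by $\gamma_0$, examine the Newton polygon of the resulting function relative to $\gamma_0$, identify that for $\tau\ne 0$ the top edge lies on the line $L^*$ of co-slope $d_0$ with a single $\tau$-dependent dot at $(0,h-1)$, and read off the new leading term $c(\tau)y^{d_0}$ and the multiplicity from that edge. Your version is somewhat more explicit than the paper's (you spell out the chain-rule reduction to $H_\tau = F_Z + \tau u F_W$, the fact $h_\alpha = h-1$ for a polar, and the characteristic polynomial $P_\tau$ of the top edge, which the paper leaves implicit in the phrase ``pushing forward along $L^*$''), but the idea and the key steps are the same.
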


\begin{proof} 
In case $d_{\gamma_{0}}>1$ we consider the function $(f_{x} +\tau f_{y})(x + \gamma_{0}(y), y)$ from which we want to construct
a solution of \eqref{eq:tau} by the method of ``pushing forward'' in the Newton diagram, as explained in the proof of Proposition \ref{p:alpha}.

The polars associated to the direction $\tau$  are the Newton-Puiseux zeroes of the function
$g(x,y) = (f_{x} +\tau f_{y})(x + \gamma_{0}(y), y)$ translated by $\gamma_{0}$, namely $\gamma_{\tau}(y) := x(y)+ \gamma_{0}(y)$.
The top edge $E_{\top}$ of the Newton polygon of $g$ is parallel to the segment $L$ defined as in Figure \ref{fig:npZ}, taking $\alpha = \gamma_{0}$.
 Whenever $d_{\gamma_{0}}>1$, the segment $L$ has only one dot which depends on $\tau\not=0$, namely the one corresponding to the 
 monomial $a(\tau)y^{h-1}$ (which comes from the contribution $\tau f_{y}$). Therefore the edging forward process will start with the initial term of the form $c(\tau)y^{d_{\gamma_{0}}}$ in order to annihilate $a(\tau)y^{h-1}$. Thus the Newton-Puiseux zero of $g$ will be of the form
 $x(y) = c(\tau)y^{d_{\gamma_{0}}} +\hot$, hence $\gamma_{\tau}(y) = \gamma_{0}(y) + c(\tau)y^{d_{\tau_{0}}} +\hot$ is a polar associated to $\tau$. This shows in particular that the constructed solution $\gamma_{\tau}$ is in the same gradient canyon as the  polar $\gamma_{0}$.

Note that the generic polars that we have  constructed $\gamma_{\tau}(y) = \gamma_{0}(y) + c(\tau)y^{d_{\tau_{0}}} +\hot$ 
are in the canyon of $\gamma_0$ and therefore $f(\gamma_{\tau},y)=ay^h+\hot$, thus the initial term is constant in the canyon, and in particular the exponent $h$ does not depend of $\tau$.


By construction the number of  roots $x=x(y)$ of $g(x,y)=0$ is $m_{\gamma_{0}}$, where 
 $(m_{\gamma_{0}} + 1, r)$ is the initial (lowest) dot of $L$.  Consequently  $(m_{\gamma_{0}},r)$ is the initial dot of $E_{\top}$, 
 hence 
 \[ \frac{h-1-r}{m_{\gamma_{0}}} = d_{\gamma_{0}}
 \]
 and $m_{\gamma_{0}}$ is the multiplicity of the canyon, i.e. the total number of  polars in the canyon  $\GC(\gamma_{0})$, for any $\tau \in \bC$.

\end{proof}


\subsection{Garcia-Barroso and Teissier's approach \cite{GT}}\label{ss:GT}

Let us  recall some of the results obtained in \cite{GT} by following their original notations.
 
 \m
\noindent
(1). Let $P_{q}(\tau) = (x(t), y(t))$ where $x(t) = t^{m}$, $y(t)= at^{m} + \hot$, be a minimal parametrisation of  an irreducible branch of the polar curve with respect to a direction $\tau \in \bP^{1}$. Here  $m_{q}= m_{q}(\tau)$ is  the multiplicity at 0 of $P_{q}(\tau)$. Teissier had proved that the family $P_{q}(\tau)$ depending of $\tau$  is equisingular for generic $\tau$, thus the multiplicity $m_{q}(\tau)$ is constant for generic $\tau$.  In the following we consider $\tau$ in such a generic set.
 
 Barroso and Teissier show in \cite{GT}
that the polars fall into subsets called ``packets''  indexed by the black vertices of the Eggers diagram of $f$,
such that they have the same contact with all branches of the curve $C:= \{ f=0\}$.    
Such a ``packet'' of polars is the set of polars from a certain union of canyons.
\m

\noindent
(2).  By \cite[Theorem 2.1]{GT}, the coefficients of $P_{q}(\tau)$ depend on $\tau$
 only from a certain well-defined exponent of $t$.
Let  $g_{q}$ denote the first exponent of $y(t)$ the coefficient of which depends of $\tau$. It is shown that all the polars in the same packet have the same  exponent $g_{q}$ and this is denoted by $\gamma_{q}$, cf \cite[pag. 406]{GT}.  

\m
\noindent
(3). Moreover,  in the development of $f(t^{m_{q}}, y_{q}(t, \tau))$, the first exponent the coefficient of which depends of $\tau$ 
is $e_{q}+ g_{q}$.  The geometric significance of $e_{q}$ is given by the identity
 \[ \mult_{0}(C, P_{q}(\tau))=  e_{q} + m_{q},\]
 where  $e_{q}= \mu_{q}(f)$ is a partial Milnor number  in the sense that,   by Teissier's formula for the polar multiplicity,  the sum of $e_{q}$'s over all polar in the packet and over all packets is equal to the Milnor number $\mu_{f}$.
 
 \m
 \noindent
(4). The concentration of points of intersection $P_{q}(\tau) \cap \{ f= \lambda\}$ on the Milnor fibre, for all generic $\tau$ 
 and as $\lambda$ approaches 0,
is equivalent  to the concentration of curvature, according to Langevin's approach \cite{La-cmh}.
 In order to locate  the zones of concentration on the Milnor fibre, i.e. the centers of the balls and their radii, Barroso and Teissier invert the convergent series $\lambda = \lambda(t)$ and expresses the coordinates $(x(t),y(t))$ as functions of $\lambda$ (see \cite[(5), page 408]{GT}).

Let us now see what are the relations between these invariants and those defined in \cite{KKP} and in our previous sections.

\subsection{A  dictionary}\

We have  shown that the value $h= \ord_{y}f(\gamma(y), y)$  is  the same for all polars in some canyon (Proposition \ref{p:alpha}). Therefore
$m_{\gamma}h := \ord_{y}f(\gamma(y^{m_{\gamma}}), y^{m_{\gamma}}) = \mult_{0}(C, \gamma_{*})$, where $m_{\gamma}$ is the multiplicity of the polar considered with its multiple structure. Note the difference to  \cite{KKP} and the preceding sections,  where by ``polar'' we mean with multiplicity 1, and precisely $m_{\gamma}$ such polars have the same image $\gamma_{*}$.

On the other hand, by \S \ref{ss:GT}(3),  from the \cite{GT} viewpoint we have $\mult_{0}(C, \gamma) = e_{\gamma} + m_{\gamma}$.
 We therefore conclude:
 \begin{equation} \label{eq:mu}
 e_{\gamma} = m_{\gamma}(h-1)
 \end{equation}
 which can be identified with a partial sum of \eqref{M}.
 
 By Theorem \ref{t:tau}:
 \[
 \gamma_{\tau}(y) = \gamma_{0}(y)+ c(\tau) y^{d_{\gamma_{0}}} + \hot \]
 which implies
 \[ f(\gamma_{\tau}(y), y) = a y^{h} + \cdots + u(\tau)y^{d_{\gamma_{0}} +h-1} + \hot
 \]
where $u(\tau)$ is the first coefficient which depends of $\tau$; thus:
 \[ f(\gamma_{\tau}(y^{m}), y^{m}) = a y^{mh} + \cdots + u(\tau)y^{md_{\gamma_{0}} +m(h-1)} + \hot
 \]

By \cite[Lemma 2.2]{GT}:
\[ f(\gamma_{\tau}(y^{m}), y^{m}) = a y^{mh} + \cdots + u(\tau)y^{e_{q} + g_{q}} + \hot
 \]
using the notations $e_{q}$ and $ g_{q}$ from  \S \ref{ss:GT}(3).  

We obtain:

\[e_{q} + g_{q} =m_{q}d_{\gamma} +m_{q}(h-1)\]
hence
\[ g_{q}= m_{q} d_{\gamma}
\]
which shows that the exponent $g_{q}$ of \cite{GT} reminded at  \S \ref{ss:GT}(2) is essentially the same as the degree $d_{\gamma}$ of the canyon, i.e.  modulo multiplication by the multiplicity $m_{\gamma}$.


\section{The correspondence of canyon disks}\label{s:disks}

We consider in this section a gradient canyon $\GC(\ga_*)$ of degree $d_{\gamma} >1$.
Let $D^{(e')}_{\gamma_{*}, \e}(\lambda;\eta)$ be the union of disks in the Milnor fibre $\{ f=\lambda\}\cap B(0;\eta)$ of $f$ defined as follows (see  \eqref{eq:hd} for the definition of the $\Horn$):
\[ D^{(e')}_{\gamma_{*}, \e}(\lambda;\eta) := \{ f=\lambda\} \cap \Horn^{(e')}(\gamma_*;\e;\eta),
\]
for some rational $e'$  close enough to $d_{\gamma}$, with $1< e' <d_{\gamma}$,  for some small enough $\e>0$, and  where by ``disk'' we mean ``homeomorphic to an open disk''.  In addition, we ask that $d< e' <d_{\gamma}$ for any other canyon degree $d < d_{\gamma}$.

We have:
\begin{equation}\label{eq:horn}
 \bigcap_{e' \in \bQ, e' \to d_{\gamma}}D^{(e')}_{\gamma_{*}, \e}(\lambda;\eta) = \{ f=\lambda\} \cap \Horn^{(d_{\gamma})}(\gamma_*;\e;\eta)
\end{equation}
and we shall write $D_{\gamma_{*}}(\lambda)$  in the following as shorthand for $D^{(e')}_{\gamma_{*}, \e}(\lambda;\eta)$, keeping in mind the parameters $e', \e, \eta$.

\smallskip

By \cite[Lemma 6.6]{KKP} 
we have--see also \eqref{eq:measure} and \eqref{eq:measure2}:
\begin{equation}\label{eq:curvintegralcanyon}  
 \mer_f(\cD^{(e')}(\gamma_{*}, \e)) = \mer_f(\cL^{(e')}(\gamma_*)) = \mer_f(\GC(\ga_*)),
\end{equation}
and moreover $\cL^{(e')}(\gamma_*)$ does not contain any other polar canyon besides  $\GC(\ga_*)$. 

This result means that a certain  part of the curvature of the Milnor fibre is concentrated in the union  $D_{\gamma_{*}}(\lambda) = \cup_{i}D_{\gamma_{*, i}}(\lambda)$, of connected disks $D_{\gamma_{*, i}}(\lambda)$.  The number of disks is the intersection number  $\mult_{0} (\{ f=0\}, \overline{\gamma_{*}})$, where $\overline{\gamma_{*}}$ is the truncation of the polar at the order $d_{\gamma}$.
Here we have to understand $\gamma_{*}$ as image of $\gamma_{*}$, which is thus the same image for all conjugates of $\gamma$, and similarly for the truncations.  There might be non-conjugate polars in the same canyon, and then the (centers of) the disks are the same.

 The centers and the radii of the disks $D_{\gamma_{*, i}}(\lambda)$ are  given more explicitly in \cite[\S 3.1]{GT}, as we shall briefly describe in the following. 
 
 First, one has to express the coordinates $x=\gamma(y)$  and $y$  in terms of $\lambda$.  One obtains 
 an expansion $(x(\lambda), y(\lambda))$ with complex coefficients:
 \begin{equation}\label{eq:centers}
    \left(  \sum_{i=m}^{\infty}  \alpha_{i}\lambda^{\frac{i}{mh}},  \sum_{i=m}^{\infty}  \beta_{i}\lambda^{\frac{i}{mh}} \right).
 \end{equation}
 For polars $\gamma_{\tau}$ depending of the generic direction $\tau$, as we have discussed in \S \ref{s:genpolars}, the first coefficients of \eqref{eq:centers} which depend of  $\tau$ are $\alpha_{md}$ and $\beta_{md}$, where $m$ is the multiplicity of $\gamma_{\tau}$ and $d$ is its polar degree, both of which are independent of the generic $\tau$, by Theorem \ref{t:tau}.  Note that $h$ is also independent of $\tau$.
 
The centers of the disks\footnote{The number of the disks is $mh$.} $D_{\gamma_{*, i}}(\lambda)$ are then the truncations of \eqref{eq:centers} up to the order $(md-1)/mh$. 
They are conjugated by $\lambda\to \omega^{i}\lambda$, where $\omega$ is a primitive root of order $mh$.
The radii of the disks are of the form $r | \lambda|^{d/h} \sim | y |^{d}$, where 
$r\in \bR_{+}$ depends on the compact subset of $\bP^{1}$ in which $\tau$ varies.  The distance between  two (centers of) such disks  is 
of order $| \lambda|^{1/h}\sim | y |$.


\begin{theorem}\label{t:localcurv}
Let $f= g\circ \varphi$, where $\varphi$ is bi-Lipschitz. Then:
\[ \lim_{\lambda\to 0} \frac{1}{2\pi} \int_{\varphi(D_{\gamma_{*, i}}(\lambda))} K_{g} \ dS \ \ge 1\]
 for any $i$.
\end{theorem}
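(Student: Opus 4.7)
The plan is to apply Gauss-Bonnet to the bi-Lipschitz image of the canyon disk inside the Milnor fibre of \(g\), and to control the geodesic curvature contribution of its boundary using the concentration-of-curvature phenomenon for \(g\).

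First I would settle the topology. Since \(\varphi\) is bi-Lipschitz and \(f = g\circ\varphi\), it restricts to a homeomorphism \(\{f=\lambda\}\to\{g=\lambda\}\), so \(D':=\varphi(D_{\gamma_*,i}(\lambda))\) is a topological open disk in the holomorphic curve \(\{g=\lambda\}\) with \(\chi(D')=1\), and its diameter is bi-Lipschitz comparable to that of \(D_{\gamma_*,i}(\lambda)\). Hence \(D'\) shrinks to the origin as \(\lambda\to 0\). After a standard regularization of the Lipschitz boundary, Gauss-Bonnet applied on the Riemann surface \(\{g=\lambda\}\) gives
\[
\int_{D'} K_g\, dS \;=\; 2\pi\,\chi(D') \;-\; \int_{\partial D'}\kappa_g\, ds \;=\; 2\pi - \int_{\partial D'}\kappa_g\, ds,
\]
in the conventions of \eqref{eq:measure} under which \cite[Theorem C]{KKP} applied to the unperturbed disk yields \(\lim_{\lambda\to 0}\int_{D_{\gamma_*,i}(\lambda)}K_f\,dS = 2\pi\). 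The target inequality then reduces to the asymptotic bound \(\limsup_{\lambda\to 0}\int_{\partial D'}\kappa_g\, ds \le 0\).

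I would establish this boundary bound as follows. By our choice of \(e'\) between \(d_\gamma\) and all smaller canyon degrees of \(f\), the boundary \(\partial D_{\gamma_*,i}(\lambda)\) lies on \(\partial\Horn^{(e')}(\gamma_*;\epsilon;\eta)\), which separates the canyon \(\GC(\gamma_*)\) from every other canyon of \(f\). Under the bi-Lipschitz map \(\varphi\), this separating Lipschitz surface is transported to a corresponding separating Lipschitz tube near \(0\); combining this with the concentration theorem of \cite{GT, KKP} applied to \(g\) (the Gauss curvature of \(\{g=\lambda\}\) is asymptotically supported on the canyon disks of \(g\), with \(o(1)\) integrated curvature outside), one concludes that \(\partial D'\) sits in the low-curvature region of \(\{g=\lambda\}\). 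On such a region a Lipschitz Jordan curve of length \(\sim|\lambda|^{d_\gamma/h}\) contributes only \(o(1)\) to \(\int\kappa_g\,ds\), yielding the claimed estimate.

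The main obstacle is making this separation precise, since \(\varphi\) is merely Lipschitz and does not send horn boundaries of \(f\) to horn boundaries of \(g\). The technical heart of the argument is therefore a bi-Lipschitz separation lemma for canyon disks: using the minimality property of canyons (Theorem \ref{thmB}) together with the \(\tau\)-independence of the canyon data provided by Theorem \ref{t:tau}, one shows that the Lipschitz tube \(\varphi(\partial\Horn^{(e')}(\gamma_*;\epsilon;\eta))\) remains disjoint from every canyon-disk interior of \(g\) for all sufficiently small \(\lambda\). Once this separation is established, the geodesic curvature estimate, and hence the theorem, follow.
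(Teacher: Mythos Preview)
Your approach via Gauss--Bonnet is genuinely different from the paper's, but it has a fatal gap in the geodesic curvature step, and the ``separation lemma'' is essentially circular.

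\textbf{The geodesic curvature bound is false.} You claim that a Lipschitz Jordan curve of length $\sim|\lambda|^{d_\gamma/h}$ lying in the low-Gaussian-curvature region contributes only $o(1)$ to $\int\kappa_g\,ds$. But geodesic curvature is not controlled by the ambient Gaussian curvature: in a perfectly flat region, any Jordan curve bounding a disk satisfies $\int\kappa_g\,ds = 2\pi$ exactly, regardless of its length (the integral is scale-invariant; for a circle of radius $r$ one has $\kappa_g\sim 1/r$ and length $\sim r$). So your target $\limsup\int_{\partial D'}\kappa_g\,ds\le 0$ cannot be reached by locating $\partial D'$ in a flat region, and with the paper's sign convention $K_g=|\jac\nu_{\bC}|^{2}\ge 0$ (see \eqref{eq:exchange}) the Gauss--Bonnet bookkeeping is also off. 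Additionally, $\partial D'$ is only a Lipschitz curve, so even making sense of $\int\kappa_g\,ds$ and bounding it for the image of a round circle under a merely bi-Lipschitz map requires work you have not indicated.

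\textbf{The separation lemma assumes the hard part.} Showing that $\varphi(\partial\Horn^{(e')})$ stays away from all canyon-disk interiors of $g$ is exactly the kind of bi-Lipschitz control over canyon locations that the theorem is meant to establish. Theorems \ref{thmB} and \ref{t:tau} concern the canyon structure of a \emph{single} function; they say nothing about how a bi-Lipschitz $\varphi$ matches canyon regions of $f$ with those of $g$. Without a new mechanism relating $\grad f$ to $\grad g$ through $\varphi$, there is no way to deduce this.

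The paper proceeds quite differently. It first proves (Lemma \ref{l:dense}, via the exchange formula) that the complex Gauss map $\nu_{\bC,f}$ has asymptotically dense image in $\bP^{1}$ on each disk $D_{\gamma_{*,i}}(\lambda)$. It then introduces pseudo-derivatives of $\varphi$, bounded by the Lipschitz constants, assembled into a matrix $M_\varphi$ satisfying $\grad f=\grad g(\varphi)\cdot M_\varphi$, and shows (Lemmas \ref{l:bounds1}, \ref{l:bounds2}) that $M_\varphi$ is bounded and $|\varphi_{2,y}|$ is bounded away from $0$ on the canyon. If $\varphi(D_f)$ missed every canyon disk of $g$, then by \cite{HP,HP2} the direction of $\grad g$ would be nearly constant on $\varphi(D_f)$, and the matrix relation would force the direction of $\grad f$ to be bounded on $D_f$, contradicting density of the Gauss map. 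Hence $\varphi(D_f)$ meets, and then (Lemma \ref{l:incl}) contains, some canyon disk $D_g$ of $g$; Lemma \ref{l:totalcurvconcentr} applied to $g$ then gives the inequality. The essential new ingredient you are missing is this link between $\grad f$ and $\grad g$ through the bounded pseudo-Jacobian of $\varphi$.
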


The proof consists of several steps.
\begin{lemma}\label{l:totalcurvconcentr}
$ \displaystyle \lim_{\lambda\to 0} \frac{1}{2\pi} \int_{D_{\gamma_{*, i}}(\lambda)} K_{f} \ dS \ $
 is a positive integer, for any $i$.
\end{lemma}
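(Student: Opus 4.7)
The plan is to show that each of these limits equals the positive integer $1$, by first computing the total asymptotic mass using \cite[Theorem C]{KKP} and then invoking a Galois symmetry to distribute it equally among the $mh$ connected components.

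First I compute the total. Additivity over connected components together with \eqref{eq:curvintegralcanyon} gives
\[
\sum_{i} \lim_{\lambda \to 0} \int_{D_{\gamma_{*,i}}(\lambda)} K_f\, dS \;=\; \mer_f(\GC(\gamma_*)) \;=\; 2\pi\bigl[\mu_f(\GC(\gamma_*)) + \mult(\GC(\gamma_*))\bigr],
\]
the last equality being \cite[Theorem C]{KKP} recalled in Section \ref{structures}. Proposition \ref{p:alpha}(c) asserts that $\ord_y f(\gamma'(y),y) = h$ takes the same value for every polar parametrization $\gamma'$ belonging to $\GC(\gamma_*)$, so the defining formulae \eqref{M} and \eqref{multi} add up to $\mult(\GC(\gamma_*))\cdot h$; combined with $\mult(\GC(\gamma_*)) = m$ from Theorem \ref{t:tau}, the total limit equals $2\pi\cdot mh$.

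Second I show the $mh$ limits coincide. The $mh$ connected components of $D_{\gamma_*}(\lambda)$ have centers produced by the $mh$-valued Puiseux parametrization \eqref{eq:centers}: setting $t_i := \omega^i \lambda^{1/mh}$ with $\omega$ a primitive $mh$-th root of unity, one has $p_i(\lambda) = (x(t_i), y(t_i))$. The leading monomials of the Puiseux series $x(t)$ and $y(t) = t^m + \hot$ transform under $t \mapsto \omega t$ by multiplication by roots of unity, so the Galois change of parameter acts, in the limit $\lambda \to 0$ and up to higher-order corrections, as a diagonal unitary transformation of $\bC^2$, and therefore as an asymptotic isometry of the ambient Hermitian metric. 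Since $f(x(t), y(t))$ is a single Puiseux series in $t$, this action sends $D_{\gamma_{*,i}}(\lambda)$ onto $D_{\gamma_{*,0}}(\omega^{-i}\lambda)$ modulo vanishing errors and preserves the curvature form $K_f\, dS$ in the limit. Consequently
\[
\lim_{\lambda \to 0} \int_{D_{\gamma_{*,i}}(\lambda)} K_f\, dS \;=\; \lim_{\lambda \to 0} \int_{D_{\gamma_{*,0}}(\lambda)} K_f\, dS \quad \text{for every } i,
\]
and combining with the first step, each limit equals $2\pi$, so $(2\pi)^{-1}$ times it is the positive integer $1$.

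The main obstacle is the equidistribution step: one must verify that the horn parameters $e', \e, \eta$ can be chosen so that the canyon disk union is genuinely permuted by the Galois action on the Puiseux parameter $t$, and that the higher-order terms in the Puiseux expansions of $x(t)$ and $y(t)$ (which break strict unitary symmetry) contribute vanishingly to the curvature integral as $\lambda \to 0$. This reduces to an explicit asymptotic analysis on the parametrization $(x(t), y(t))$ with $y(t) = t^m + \hot$, using a local transverse coordinate to the polar to describe the Milnor fibre near each $p_i(\lambda)$ and tracking how $K_f\, dS$ behaves under the change of parameter.
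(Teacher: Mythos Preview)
Your overall architecture---compute the total canyon mass via \cite[Theorem C]{KKP}, then equidistribute over the conjugate disks by a Galois symmetry---is exactly the route the paper takes. However, your sharpened conclusion that each limit equals $1$ is \emph{false}, and this is where the argument breaks.

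The gap is the identification ``$\mult(\GC(\gamma_*)) = m$'' together with ``there are $mh$ components''. The symbol $m$ in \eqref{eq:centers} is the multiplicity of a single (generic) polar branch, whereas $\mult(\GC(\gamma_*))$ counts \emph{all} Puiseux roots of $f_x$ lying in the canyon; these need not coincide, and the number of disks is $\mult_0(\{f=0\},\overline{\gamma_*})$ for the \emph{truncated} polar $\overline{\gamma}$, which can be a proper divisor of $\mult(\GC(\gamma_*))\cdot h$. Concretely, take $f=x^3+y^{12}$ from Example~\ref{e:2}: here $\mult(\GC)=2$ (the polar $x=0$ is a double root of $f_x$), $h=12$, so the total mass is $24\cdot 2\pi$; but $\overline{\gamma_*}=\{x=0\}$ and $\mult_0(\{f=0\},\{x=0\})=12$, so there are only $12$ disks. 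Each disk therefore carries curvature $2\cdot 2\pi$, and the integer in the lemma is $2$, not $1$.

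Once you drop the claim ``$=1$'', the symmetry step alone gives only that each disk carries the \emph{rational} number $\dfrac{\mult(\GC)\cdot h}{\mult_0(\{f=0\},\overline{\gamma_*})}$, and you still owe an argument for integrality. The paper supplies this by writing the total as $\sum_{\gamma'}\mult_0(\{f=0\},\gamma'_*)$ over the polars $\gamma'$ in the canyon and observing that the disks, being conjugate, each contain the same number of points of $\gamma'_{\tau*}\cap\{f=\lambda\}$ for every generic $\tau$; equivalently (and this is what the paper exploits immediately afterwards in \eqref{eq:exchange}--\eqref{eq:degree}) the curvature integral over a disk is $2\pi$ times the degree of the restricted complex Gauss map, which is automatically a nonnegative integer, and positive because the disk meets every generic polar.
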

\begin{proof}
By \eqref{eq:curvintegralcanyon} the canyon $\GC(\ga_*)$ concentrates the total curvature:
\[ \frac{1}{2\pi} \mer_{f}\GC(\ga_*) = \sum \mult_{0}(\{ f=0\}, \gamma'_{*}),
\]
where the sum is taken over all polars $\gamma'_{*}$ in the canyon $\GC(\ga_*)$. This is a multiple of  the number $\mult_{0} (\{ f=0\}, \overline{\gamma_{*}})$ of the disks of the canyon $\GC(\ga_*)$. These disks contain 
all the intersections  of the Milnor fibre with the polars $\gamma_{\tau *}$, for $\tau$ in some dense subset of  a compact $K(\lambda) \subset \bP^{1}$ which tends to $\bP^{1}$ when $\lambda \to 0$. On the other hand, as we have seen just above, these disks are conjugate. Therefore, when $\lambda \to 0$, each such disk concentrates the same total curvature, which must be a positive integer (modulo $2\pi$).
\end{proof}

We need the interpretation of Lemma \ref{l:totalcurvconcentr} in terms of the directions $\tau\in \bP^{1}$. By applying Milnor's exchange formula (see Langevin's paper \cite{La-cmh})
we have the equalities:
\begin{equation}\label{eq:exchange}
   \frac{1}{2\pi} \int_{D_{\gamma_{*, i}}(\lambda)} K_{f} \ dS  = \frac{1}{2\pi} \int_{D_{\gamma_{*, i}}(\lambda)} |\jac \nu_{\bC}|^{2} \ dS  
  \end{equation}   
where $\jac \nu_{\bC}$ denotes the Jacobian determinant of the complex Gauss map. In turn, this is equal, cf \cite{La-cmh}, to:
\begin{equation}\label{eq:degree}
 \frac{1}{2\pi} \int_{ D_{\gamma_{*, i}}(\lambda)} \nu_{\bC}^{*} dp  =
  \frac{1}{2\pi} \int_{\nu_{\bC}( D_{\gamma_{*, i}}(\lambda))} \deg  ({\nu_{\bC}}_{| D_{\gamma_{*, i}}(\lambda)}) \ dp 
  \end{equation} 
where the last equality follows from the  constancy of the degree  $\deg  ({\nu_{\bC}}_{| D_{\gamma_{*, i}}(\lambda)})$ by Theorem \ref{t:tau}.

Since $2\pi$ represents the volume of $\bP^{1}$, we have proved:

\begin{lemma}\label{l:dense}
The image of the disk $D_{\gamma_{*, i}}(\lambda)$
 by the Gauss map $\nu_{\bC}$, as $\lambda$ tends to 0, is a dense subset of $\bP^{1}$, the complementary of which has measure zero.
 \fin
\end{lemma}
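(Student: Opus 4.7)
The plan is to combine Theorem \ref{t:tau} with the geometric interpretation of the complex Gauss map. Recall that at any smooth point $z \in \{f = \lambda\}$, the complex tangent line is the line annihilated by $(f_x(z), f_y(z))$, so $\nu_{\bC}(z) = [\tau : 1]$ holds exactly when $(f_x + \tau f_y)(z) = 0$, i.e., when $z$ lies on the polar $\gamma_\tau$. Thus the fibre of $\nu_{\bC}$ over $[\tau:1]\in\bP^{1}$ inside the Milnor fibre is precisely its intersection with $\gamma_\tau$.

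By Theorem \ref{t:tau}, for every $\tau \in \bC$ outside a finite exceptional set, the polar $\gamma_\tau$ lies in the canyon $\GC(\gamma_{0*})$, so that for $\lambda$ small enough its intersection points with the Milnor fibre are all contained in $\Horn^{(e')}(\gamma_{0*}; \e; \eta)$, hence in $\bigsqcup_{j} D_{\gamma_{*, j}}(\lambda)$. The total number of such intersections equals $\mult_0(\{f=0\}, \gamma_{\tau *}) = mh$, which coincides with the number of disks; combining the cyclic symmetry $\lambda \mapsto \omega \lambda$ that permutes the disks with the analytic dependence of $\gamma_\tau$ on $\tau$, one concludes that $\gamma_\tau$ meets each individual disk $D_{\gamma_{*, i}}(\lambda)$. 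Consequently $[\tau : 1] \in \nu_{\bC}(D_{\gamma_{*, i}}(\lambda))$ for every generic $\tau$, and since the generic set is cofinite in $\bP^{1}$, the complement of $\nu_{\bC}(D_{\gamma_{*, i}}(\lambda))$ in $\bP^{1}$ has measure zero. Density is then automatic: $\nu_{\bC}$ is a non-constant holomorphic map between one-dimensional complex manifolds, so by the open mapping theorem $\nu_{\bC}(D_{\gamma_{*, i}}(\lambda))$ is open in $\bP^{1}$, and an open subset of $\bP^{1}$ whose complement has measure zero must be dense.

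The step I expect to be the main obstacle is the per-disk claim, namely that each individual disk (and not merely their union) is met by $\gamma_\tau$ for generic $\tau$. My plan is to handle it by combining the rotational conjugacy of the disk centers in \eqref{eq:centers} with the continuity of the polar family $\{\gamma_\tau\}_{\tau \in \bP^{1}}$; alternatively, it may be extracted from the degree identity \eqref{eq:degree} together with Lemma \ref{l:totalcurvconcentr}, which jointly force each disk to contribute a positive integer amount of curvature and hence to cover a subset of $\bP^{1}$ of full measure.
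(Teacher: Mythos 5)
Your primary route — read the fibres of the complex Gauss map as intersections of the Milnor fibre with the $\tau$-polars, then use Theorem~\ref{t:tau} to force those intersection points into the horn — is essentially an unwinding of Langevin's exchange formula, which is also what drives the paper's argument; the two approaches are closer than they first appear. The paper, however, does not attempt the direct counting: it reads \eqref{eq:exchange}--\eqref{eq:degree} as ``$\frac{1}{2\pi}\int_{D_{\gamma_{*,i}}(\lambda)}K_f\,dS = \deg(\nu_{\bC}|_{D_{\gamma_{*,i}}(\lambda)})\cdot \tfrac{1}{2\pi}\mathrm{vol}(\nu_{\bC}(D_{\gamma_{*,i}}(\lambda)))$'', notes that the left side tends to a positive integer by Lemma~\ref{l:totalcurvconcentr}, that the degree is a constant positive integer by Theorem~\ref{t:tau}, and that $2\pi = \mathrm{vol}(\bP^1)$; full measure follows immediately. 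Your stated ``alternative'' in the last paragraph is precisely this, so your fallback reproduces the paper.

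Your primary counting route, though, has a genuine gap exactly where you flag it, and the specific fixes you sketch do not quite close it. First, the identity ``$\mult_0(\{f=0\},\gamma_{\tau*})=mh$ coincides with the number of disks'' is only tight when the canyon consists of a single polar branch; in general the horn contains several polar branches (possibly non-conjugate, cf.\ Example~\ref{e:1}), each contributing intersection points, and the count you need is ``at least one per disk'', not ``exactly $mh$ in total''. Second, the rotational conjugacy $\lambda\mapsto\omega\lambda$ relabels the $mh$ branches of $\lambda^{1/mh}$ in \eqref{eq:centers}; to conclude that it also permutes the intersection points of a \emph{fixed} polar $\gamma_\tau$ with $\{f=\lambda\}$ in lockstep with the disk centers, one must check that $\gamma_\tau$ agrees with the truncation $\overline{\gamma_*}$ to order $\ge d_\gamma$ branch-by-branch — this is true (it is essentially Proposition~\ref{p:alpha}(c) together with Theorem~\ref{t:tau}) but is not supplied by ``continuity in $\tau$'' alone, and you would still need to control $\tau$ on a compact exhaustion $K(\lambda)\nearrow\bP^1$ as $\lambda\to0$ since the horn radius is fixed while $c(\tau)$ is unbounded. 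Finally, a small stylistic point: the appeal to the open mapping theorem is unnecessary; a subset of $\bP^1$ whose complement has measure zero is automatically dense.

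Net: the mechanism you identify is the right one and your fallback is the paper's proof, but the per-disk step in your primary route is not secured as written and needs either the branch-matching argument made precise or, more economically, the curvature-integral argument you already cite.
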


\medskip
We continue the proof of Theorem \ref{t:localcurv}. From $f=g\circ \varphi$ we get the relation:

\[ \grad f (x,y)=  \grad g(\varphi(x,y))\circ M_{\varphi}(x,y)
\]

\noindent
where $M_{\varphi}$ is a certain matrix, which plays the role of the  Jacobian matrix,  not everywhere defined but only in almost all points.
Let us introduce it.  The idea is that even if the partial derivatives of $\varphi$
do not exist at all points, the limits used to define them are bounded away from 0 in absolute value. Let us notice that the components $\varphi_{1}$ and  $\varphi_{2}$ of the map $\varphi$ are real maps depending on coordinates $x, \bar x, y, \bar y$ but that only the derivatives with respect to $x$ and $y$ will play a role in the following.

 By the bi-Lipschitz property of $\varphi = (\varphi_{1}, \varphi_{2})$ we have, in some ball neighbourhood $B(0, \eta)$ of the origin  $(0,0)$, for some $0<m <M$:
\[    m \le \frac{\left\| (\varphi_{1}, \varphi_{2})(x,y_{0}) - (\varphi_{1}, \varphi_{2})(x_{0},y_{0})\right\| }{\lvert x-x_{0} \rvert }\le M
\]
and by taking the limit as $x\to x_{0}$ we get:
\begin{equation}\label{eq:gradvarphi}
  m \le \left\| (\varphi_{1, x}, \varphi_{2, x})(x_{0}, y_{0}) \right\| \le M
  \end{equation}
where the notation $\varphi_{1, x}$ suggests partial derivative with respect to $x$;  it has a well-defined value at points where this derivative exists. This limit is not defined elsewhere, but it is however bounded by the values $m$ and $M$. We shall call \emph{pseudo-derivatives} such bounded quantities $\varphi_{1, x}$ and $\varphi_{2, x}$.  

Similarly we get, by taking the limit $y\to y_{0}$:
\begin{equation}\label{eq:gradvarphi-sim}
 m \le \left\| (\varphi_{1, y}, \varphi_{2, y})(x_{0}, y_{0}) \right\| \le M.
  \end{equation}
 
 We shall also use the notations $\grad \varphi_{i} := (\varphi_{i,x},   \varphi_{i,y}) $ for $i=1,2$.
 
 With these notations we shall prove that the matrix $M_{\varphi} = \begin{pmatrix}
\varphi_{1,x} &   \varphi_{1,y}   \\
\varphi_{2,x} &  \varphi_{2,y}
\end{pmatrix}$ is  bounded in some neighbourhood of the origin, in a strong sense that we shall define below.
 
\begin{lemma}\label{l:bounds1}
There exist $r_{1}, r_{2}>0$  such that:
\[ \|  \grad \varphi_{1} \|  \ge r_{1} \mbox{ and }  \|  \grad \varphi_{2} \| \ge r_{2}
\]
in some neighbourhood  of the origin.
\end{lemma}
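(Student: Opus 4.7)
The strategy is to leverage the inverse map $\psi := \varphi^{-1}$, which is itself bi-Lipschitz with constants $1/M$ and $1/m$. Applying the same derivation that produced \eqref{eq:gradvarphi} and \eqref{eq:gradvarphi-sim} to $\psi$ in the target coordinates $(u,v)$ yields the companion pseudo-derivative bounds
\[
\|(\psi_{1,u},\psi_{2,u})(u_0,v_0)\| \le 1/m, \qquad \|(\psi_{1,v},\psi_{2,v})(u_0,v_0)\| \le 1/m
\]
in a neighbourhood of the origin. These are upper bounds on the \emph{columns} of the matrix $M_\psi$, which by formal inversion of $M_\varphi\cdot M_\psi=I$ will produce lower bounds on the \emph{rows} of $M_\varphi$, i.e.\ on $\|\grad \varphi_i\|$, exactly as needed.

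The core step exploits the identity $\varphi_1\circ\psi(u,v)\equiv u$ along the Lipschitz curve $\gamma(u):=\psi(u,v_0)$ passing through $p_0:=\psi(u_0,v_0)$. Since $\varphi_1(\gamma(u))=u$ identically, the difference quotient of $\varphi_1$ along $\gamma$ equals $1$ exactly. Writing $\gamma(u)-p_0=(\Delta x,\Delta y)$, the Lipschitz bound on $\psi$ gives $\|(\Delta x,\Delta y)\|\le (1/m)\,|u-u_0|$. Interpreting $\grad\varphi_1(p_0)$ in the pseudo-derivative framework, a Cauchy-Schwarz pairing then yields
\[
1 \;=\; \left|\frac{\varphi_1(\gamma(u))-\varphi_1(p_0)}{u-u_0}\right| \;\le\; \|\grad\varphi_1(p_0)\|\cdot\frac{\|(\Delta x,\Delta y)\|}{|u-u_0|}\;\le\;\frac{1}{m}\|\grad\varphi_1(p_0)\|,
\]
so that $\|\grad\varphi_1(p_0)\|\ge m$. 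The identical argument, with the curve $v\mapsto\psi(u_0,v)$ and the identity $\varphi_2\circ\psi(u_0,v)=v$, gives $\|\grad\varphi_2(p_0)\|\ge m$. Taking $r_1=r_2=m$ completes the proof.

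The main delicate point is the Cauchy-Schwarz inequality in the middle display: since $\varphi$ need not be differentiable at $p_0$, the quantity $\|\grad\varphi_1(p_0)\|$ must be read in the pseudo-derivative sense introduced just before the statement of the lemma, that is, as an upper bound on difference quotients of $\varphi_1$ out of $p_0$ valid uniformly in the direction of approach. The curve $\gamma$ approaches $p_0$ along a direction determined by the pseudo-derivative of $\psi$ at $(u_0,v_0)$, and since $\|\grad\varphi_1\|$ bounds the difference quotient in every approach direction, the inequality is legitimate. This matching of admissible approach directions is the only conceptual hurdle; once it is secured, the pseudo-derivative chain-rule argument runs cleanly and the bounds $r_1,r_2$ are uniform on a whole neighbourhood of the origin.
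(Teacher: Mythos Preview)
Your argument is correct and follows essentially the same idea as the paper's proof: both exploit the identity coming from $\varphi\circ\varphi^{-1}=\id$ (equivalently $\varphi^{-1}\circ\varphi=\id$) together with the upper Lipschitz bound on the inverse to force a lower bound on the rows $\grad\varphi_i$ of $M_\varphi$. The paper packages this via the matrix equation $M_{\varphi^{-1}}M_\varphi=I$ and reads off the $(1,1)$ and $(2,2)$ entries, whereas you run the same chain-rule computation along the curves $u\mapsto\psi(u,v_0)$ and $v\mapsto\psi(u_0,v)$; your version is slightly more direct in that it bounds $\grad\varphi_i$ immediately rather than first bounding $\grad\varphi^{-1}_i$ and then invoking symmetry.
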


\begin{proof}
\noindent 
With the above notations, from $\varphi^{-1} \circ \varphi = \id$ on  $B(0, \eta)$ we get:
\[  \begin{pmatrix}
{\varphi^{-1}}_{1,x} &   {\varphi^{-1}}_{1,y}   \\
{\varphi^{-1}}_{2,x} &  {\varphi^{-1}}_{2,y}
\end{pmatrix}
\begin{pmatrix}
\varphi_{1,x} &   \varphi_{1,y}   \\
\varphi_{2,x} &  \varphi_{2,y}
\end{pmatrix}
=
\begin{pmatrix}
1 &  0   \\
0 & 1
\end{pmatrix}
\]
and so:
\[  \left\{  \begin{array}{l}
 {\varphi^{-1}}_{1,x} \varphi_{1,x} +  {\varphi^{-1}}_{1,y}\varphi_{2,x} =1 \\
{\varphi^{-1}}_{2,x} \varphi_{1,y} +   {\varphi^{-1}}_{2,y}\varphi_{2,y}  =1.
\end{array} \right.
\]

From this and from \eqref{eq:gradvarphi} we get that 
$ \|  \grad {\varphi^{-1}}_{1}\|$ and $ \|  {\grad \varphi^{-1}}_{2}\|$ are bounded away from $0$ in some neighbourhood of the origin.
By symmetry we get the same conclusion for $ \varphi_{1}$ and  $\varphi_{2}$,  hence our claim is proved.
\end{proof}

\begin{lemma}\label{l:bounds2}
There exists some $m_{1} >0$ such that:
\[ \| \varphi_{2,y} (x,y) \| \ge m_{1} \]
 for any $(x,y)$ belonging to the canyon $\GC(\gamma_{*})$.
\end{lemma}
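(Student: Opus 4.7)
The strategy is to exploit two facts about the canyon $\GC(\gamma_{*})$: after the standard coordinate normalization of \S\ref{NParc} we have $T(\gamma_{*})=[0:1]$, so $\ord_{y}\gamma>1$, and every arc $\alpha$ in the canyon has the form $\alpha(y)=\gamma(y)+cy^{d_{\gamma}}+\hot$ with $d_{\gamma}>1$. These two conditions, combined with the bi-Lipschitz property of $\varphi$, will force $\varphi_{2,y}$ to approach a fixed nonzero constant on the canyon.

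\emph{Step 1 (Common linear term along canyon arcs).} By the setup preceding Theorem \ref{t:degree}, we have $\ord_{y}\varphi_{2}(\gamma(y),y)=1$, so we may write
\[
\varphi_{2}(\gamma(y),y)=c_{0}y+\hot,\qquad c_{0}\neq 0.
\]
For any arc $\alpha(y)=\gamma(y)+cy^{d_{\gamma}}+\hot$ in the canyon, the Lipschitz bound yields
\[
|\varphi_{2}(\alpha(y),y)-\varphi_{2}(\gamma(y),y)|\le M\,|\alpha(y)-\gamma(y)|=O(|y|^{d_{\gamma}}),
\]
and since $d_{\gamma}>1$ this perturbation is $o(y)$. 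Hence $\varphi_{2}(\alpha(y),y)=c_{0}y+o(y)$ with the \emph{same} leading coefficient $c_{0}$ for every arc $\alpha$ of the canyon. Moreover, because $\ord_{y}\gamma>1$ and $d_{\gamma}>1$, for any canyon arc $\alpha$ one has $\alpha'(y)=\gamma'(y)+cd_{\gamma}y^{d_{\gamma}-1}+\hot=O(|y|^{\e})$ for some $\e>0$, uniformly in $c$ varying in a compact set.

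\emph{Step 2 (Computing the pseudo-derivative at a canyon point).} Fix $P_{0}=(x_{0},y_{0})\in\GC(\gamma_{*})$ and choose the arc $\alpha(y):=\gamma(y)+cy^{d_{\gamma}}$ with $c:=(x_{0}-\gamma(y_{0}))/y_{0}^{d_{\gamma}}$, so that $\alpha(y_{0})=x_{0}$. For small $h$, decompose
\[
\varphi_{2}(x_{0},y_{0}+h)-\varphi_{2}(x_{0},y_{0})=\bigl[\varphi_{2}(x_{0},y_{0}+h)-\varphi_{2}(\alpha(y_{0}+h),y_{0}+h)\bigr]+\bigl[\varphi_{2}(\alpha(y_{0}+h),y_{0}+h)-\varphi_{2}(\alpha(y_{0}),y_{0})\bigr].
\]
The first bracket is bounded by $M|x_{0}-\alpha(y_{0}+h)|=M|\alpha(y_{0})-\alpha(y_{0}+h)|\le 2M|h|\cdot|\alpha'(y_{0})|=O(|h|\,y_{0}^{\e})$ by Step 1. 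The second bracket equals $c_{0}h+o(h)$ by applying Step 1 to $\alpha$ and expanding at $y_{0}$. Dividing by $h$ and passing to the limit yields
\[
\varphi_{2,y}(x_{0},y_{0})=c_{0}+O(y_{0}^{\e}).
\]
Hence, after shrinking the neighborhood of the origin, $|\varphi_{2,y}(x_{0},y_{0})|\ge |c_{0}|/2$ on the canyon, and we may set $m_{1}:=|c_{0}|/2$.

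\emph{Main obstacle.} The delicate point is Step 1: the assertion that all arcs in the canyon produce the \emph{same} leading coefficient $c_{0}$ in the expansion of $\varphi_{2}$. This depends crucially on the inequality $d_{\gamma}>1$, which makes the Lipschitz error $O(|y|^{d_{\gamma}})$ strictly subordinate to the linear term $c_{0}y$. The fact that one may freely replace the exact point $(x_{0},y_{0})$ with the arc $\alpha$ through it---absorbing the replacement into an $O(y_{0}^{\e})$ correction---also hinges on the same inequalities $\ord_{y}\gamma>1$ and $d_{\gamma}>1$, which together force the canyon to be asymptotically tangent to the $y$-axis.
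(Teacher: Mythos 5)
The proposal founders on Step 1. You assert that $\ord_{y}\varphi_{2}(\gamma(y),y)=1$ ``by the setup preceding Theorem~\ref{t:degree},'' but that setup takes $\varphi$ to be bi-\emph{holomorphic}, whereas in Lemma~\ref{l:bounds2} (and throughout \S\ref{s:disks}) $\varphi$ is only bi-Lipschitz. For a bi-Lipschitz $\varphi$ the most one gets for free is $\|\varphi(\gamma(y),y)\|\sim |y|$, and this does \emph{not} imply $\|\varphi_{2}(\gamma(y),y)\|\sim |y|$: the linear growth could concentrate entirely in the first coordinate $\varphi_{1}$ while $\varphi_{2}$ is of strictly higher order. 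Ruling out this degeneracy is precisely the heart of the lemma. The paper handles it by (i) choosing miniregular coordinates for both $f$ and $g$, (ii) observing that the tangential polar $\gamma$ has contact $k>1$ with some root $\xi$ of $f$, (iii) using $f=g\circ\varphi$ to send $\xi$ to a root $\eta$ of $g$, (iv) invoking miniregularity of $g$ to see that $\eta$ is not tangent to $[1:0]$, which forces $\ord_{y}\varphi_{2}(\xi(y),y)=1$, and (v) transferring this back to $\gamma$ via the Lipschitz bound and the contact $>1$. None of these ingredients appears in your argument, so the central claim is taken as given rather than proved.

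There is a second, related difficulty: you write $\varphi_{2}(\gamma(y),y)=c_{0}y+\hot$ and later differentiate $\varphi_{2}\circ\alpha$ term by term, implicitly treating $\varphi_{2}$ as admitting a Puiseux/Taylor expansion with a well-defined leading coefficient along arcs. That is legitimate in the analytic setting but not for a merely Lipschitz map, where one can only speak of orders of magnitude $\|\cdot\|\sim|y|$ and of pseudo-derivatives defined almost everywhere and bounded. The paper's proof is deliberately phrased in terms of such order equivalences (e.g.\ showing $\|\varphi_{2}(x,y)/y\|$ is bounded away from $0$ in the canyon and concluding the lower bound on the pseudo-derivative norm), and avoids extracting a constant $c_{0}$. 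Once the gap in Step 1 is filled with the miniregularity argument, the rest of your outline can be recast in that language, but as written it relies on analyticity that is not available.
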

\begin{proof}
Let $(x,y)\in \GC(\gamma_{*})$ in the following.
 
By the definition of the canyon, and denoting $d:= d_{\gamma}$, we have:
\[  \| (x,y) - (\gamma(y), y) \| \sim | y|^{d}. 
\]
By the bi-Lipschitz property we then have the equivalence:
\[\| \varphi(x,y) - \varphi(\gamma(y), y) \|  \sim | y|^{d}.
\]
Since $d>1$ we get that $\| (x,y)\| \sim | y|$ and on the other hand, by dividing with $|y|$, the limit 
\[\left\| \frac{\varphi(x,y)}{| y|} - \frac{ \varphi(\gamma(y), y)}{ | y|} \right\| \to 0 \mbox{ as }  y\to 0.  
\]
These imply in particular:
\begin{equation}\label{eq:varphi2}
\left\| \frac{\varphi_{2}(x,y)}{| y|} - \frac{ \varphi_{2}(\gamma(y), y)}{ |y|} \right\| \to 0 \mbox{ as }  y\to 0.  
\end{equation}

We claim that $\|\varphi_{2}(\gamma(y), y) \| \sim |y| $. From this and from \eqref{eq:varphi2} we then get 
that \\ 
$\| \varphi_{2}(\gamma(y), y)/ |y| \|$ is bounded away from 0 as $y\to 0$, which means that the pseudo-derivative norm $\| \varphi_{2, y}\|$ is bounded away from 0 in the canyon; this proves our lemma. 

\m

Let us now prove the above claim. From the very beginning we may choose the coordinates in $\bC^{2}$ such that both $f$ and $g$ are \emph{miniregular}\footnote{in the terminology of \cite{HP, KKP}}, i.e. that the tangent cones of $f$ and $g$ do not contain the direction $[1;0]$.
 By our assumptions, the polar $\gamma$ is \emph{tangential}, i.e. its tangent cone is included in the one of $\{ f=0\}$. Let us assume without loss of generality that this is the $y$-axis. This means that $\gamma$ has contact $k>1$ with some root $(\xi(y),y)$ of $\{ f=0\}$.
  By the bi-Lipschitz property:
 \begin{equation}\label{eq:bilip}
  m \| (\xi(y),y) - (\gamma(y), y) \| \le \| \varphi(\xi(y),y) - \varphi(\gamma(y), y) \| \le  M \| (\xi(y),y) - (\gamma(y), y) \|
 \end{equation}
and we have the equivalence $\| (\xi(y),y) - (\gamma(y), y) \|  \sim | y|^{k}$. Since by bi-Lipschitz we have
 $ \| \varphi(\gamma(y), y) \| \sim | y|$ then by the above facts we get:
\begin{equation}\label{eq:varphi12}  
 \| \varphi(\xi(y), y) \| \sim | y|.
\end{equation}
Next:
\[  \| (\varphi_{1}(\xi(y),y), \varphi_{2}(\xi(y),y) \| = | \varphi_{2}(\xi(y),y)| \left\|  \left( \frac{\varphi_{1}(\xi(y),y)}{\varphi_{2}(\xi(y),y)}, 1\right) \right\|
\]
Since $f = g\circ \varphi$, the root $\xi$ is sent by $\varphi$ to some root $\eta =(\eta_{1}, \eta_{2})$ of $g$, which means that the direction $ \left[ \frac{\varphi_{1}(\xi(y),y)}{\varphi_{2}(\xi(y),y)}, 1\right]$ is the same as the direction  $[\frac{\eta_{1}}{ \eta_{2}}, 1]$. The later tends to the direction of the tangent line to $\eta$, which is different from $[1,0]$ by our assumption. Hence this is of the form $[a,1]$, where $a\in \bC$. Consequently: 
\[  \ord_{y}\varphi_{1}(\xi(y),y) \ge   \ord_{y}\varphi_{2}(\xi(y),y).
\]
Thus, with help of \eqref{eq:varphi12}, we get:
\[  \ord_{y}\varphi_{2}(\xi(y),y) =   \ord_{y}\varphi(\xi(y),y) = 1,
\]  
which implies  $\|\varphi_{2}(\xi(y), y) \| \sim |y|$ and which, in turn, implies our claim by using again \eqref{eq:bilip} and since 
$\gamma(y)$ has contact $>1$ with $\xi(y)$.
\end{proof}

\subsection{Continuation of the proof of  Theorem \ref{t:localcurv}}\ \\

\noindent
\textbf{Step 1.} We claim that $\varphi(D_{\gamma_{*, i}}(\lambda))$ intersects a disk cut out by some horn $\Horn^{(d)}(\gamma_{g,*};\e;\eta)$ of degree $d>1$ into the fibre $g=\lambda$.

 Let $G(\lambda)$ denote what remains  from the Milnor fibre  $g=\lambda$ after taking out all the horns of degree $d>1$.
By \emph{reductio ad absurdum}, let us suppose that  $\varphi(D_{\gamma_{*, i}}(\lambda))\subset G(\lambda)$ asymptotically, that is for any $\lambda$ close enough to $0$. 
 We have seen (after \eqref{eq:centers}) that the distribution of curvature in $G(\lambda)$ is of order 
 equal to $\ord |\lambda |^{1/h}$. The integral of curvature over  $G(\lambda)$  equals $2\pi (r-1)$,
 which means that the image of the Gauss map on $G(\lambda)$ has dense image in $\bP^{1}$ (see also Lemma \ref{l:dense}) and the degree of this map is $r-1$,  thus it is at least 1 if $r\ge 2$. 
 Moreover, by the results of Henry and Parusi\' nski \cite{HP, HP2},  the variation of the gradient itself on $G(\lambda)$ is of order equal to $\ord |\lambda |^{1/h}$, which means that there is no concentration of curvature on $G(\lambda)$ of higher order. 
 
The diameter of the disk $D_{\gamma_{*, i}}(\lambda)$ is of order  $\ord |\lambda |^{d/h}$ with $d>1$,
thus its bi-Lipschitz transform $\varphi(D_{\gamma_{*, i}}(\lambda))$ is of the same order. Therefore the integral of curvature
 over  $\varphi(D_{\gamma_{*, i}}(\lambda))$ is asymptotically zero\footnote{this can be compared with the more particular case treated in \cite[Lemma 6.7]{KKP}}, namely:
   \begin{equation}\label{eq:lim}
  \lim_{\lambda\to 0}\int_{\varphi(D_{\gamma_{*, i}}(\lambda))} K_{g} \ dS  =0.
\end{equation}

Then, by using the ``exchange formula''  \eqref{eq:degree} and  \eqref{eq:exchange} for 
 $\varphi(D_{\gamma_{*, i}}(\lambda))$, it follows that its  image  in $\bP^{1}$ by  the Gauss map $\nu_{\bC, g} =\frac{\grad g}{|\grad g|} : B(0;\eta) \to \bP^{1}$ is a  contractible set which tends to a measure zero subset  $A\subset \bP^{1}$ as $\lambda \to 0$. 
But we claim more: 

 \smallskip
\noindent
(*) \emph{If $\varphi(D_{\gamma_{*, i}}(\lambda))\subset G(\lambda)$ then the image $\nu_{\bC, g}(\varphi(D_{\gamma_{*, i}}(\lambda)))$ tends to a constant when $\lambda \to 0$.}

 \smallskip
The variation 
  of the direction of the gradient of $g$ (i.e. of the Gauss map $\nu_{\bC, g}$) on $\varphi(D_{\gamma_{*, i}}(\lambda))$ is 
 of order $d/h$, with $d>1$. This  means that it is asymptotically zero with respect to the 
 variation  on $G(\lambda)$, which is of order $1/h$ by the result of Henry and Parusi\' nski \cite{HP, HP2}. 
Thus the direction of the gradient of $g$ on $\varphi(D_{\gamma_{*, i}}(\lambda))$ tends to a constant and our claim is proved.
  


 We are now finishing the proof of Step 1.  We have:
   \[ \grad f (x,y)=  \grad g(\varphi(x,y))\circ M_{\varphi}(x,y).
\]
  
 Let $(a,1)$ denote the limit direction of the gradient of $g$ that we obtain by the above condition (*). We then have:
\begin{equation}\label{eq:bounded}  
 (a,1) \begin{pmatrix}
\varphi_{1,x} &   \varphi_{1,y}   \\
\varphi_{2,x} &  \varphi_{2,y}
\end{pmatrix}
= (a \varphi_{1,x} + \varphi_{2,x},  a\varphi_{1,y} + \varphi_{2,y})
\end{equation}
and, by using \eqref{eq:gradvarphi-sim} and Lemma \ref{l:bounds2}, 
we get 
\[ |  a\varphi_{1,y} + \varphi_{2,y} | \ge  | \varphi_{2,y} | - | a\varphi_{1,y}| \ge m_{1} - | a\varphi_{1,y}| >0.
\]
It then follows from the relation \eqref{eq:bounded} that the modulus of the direction of the gradient vector $\grad f (x,y)$
on the disk $D_{\gamma_{*, i}}(\lambda)$, namely:
\[ \frac{\|a \varphi_{1,x} + \varphi_{2,x}\|}{\|a\varphi_{1,y} + \varphi_{2,y}\|}
\]
 is bounded,  since the denominator is bounded away from 0 and the numerator  is less or equal to  $\max(m_{1}, M)$.
This contradicts Lemma \ref{l:dense}.  Step 1 is thus proved.

\bigskip

\noindent
\textbf{Step 2.} 

We still refer to canyon disks of canyon degree $>1$.  Let $D_{f}$ be some disk cut out on the Milnor fibre  $f^{-1}(\lambda)$ by some horn $\Horn^{(d)}(\gamma_{g,*};\e;\eta)$ of a canyon $\GC(\gamma_{*})$ of degree  $d =\deg D_{f}$.  We recall\footnote{cf the discussion about radius before the statement of Theorem \ref{t:localcurv}}  that the radius of $D_{f}$ is  $k |y|^{d}\sim_{\ord}|\lambda|^{d/h}$, for some $k>0$ and that  the distance between two conjugated disks is of order $\ord | y|$.  

 If two polars are in the same canyon, then their associated disks coincide (by definition).

By ``canyon disk'' we shall mean in the following such a disk of radius order $d/h$ with respect to $|\lambda|$, modulo some multiplicative constant $>0$ which is not specified. 

By Step 1, there is some canyon disk $D_{g}$ of $g$,  of canyon order $>1$, such that:
\[  \varphi(D_{f})\cap D_{g} \not= \emptyset.
\] 
\begin{lemma}\label{l:incl}
 If $\varphi(D_{f})\cap D_{g} \not= \emptyset$ then:
\[
\deg D_{g} \ge \deg D_{f}
\]
and moreover $\varphi(D_{f})$ includes $D_{g}$.
\end{lemma}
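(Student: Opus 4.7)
\textbf{Proof plan for Lemma \ref{l:incl}.} The strategy is to combine the bi-Lipschitz comparison of diameters with the density of the Gauss image established in Lemma \ref{l:dense} and the matrix bounds from Lemmas \ref{l:bounds1}--\ref{l:bounds2}. The inequality of degrees will be proved by contradiction, and the set-theoretic inclusion will then follow from a size-versus-center comparison.

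\emph{Setup.} Write $d_f := \deg D_f$ and $d_g := \deg D_g$, and let $h_f$, $h_g$ be the associated heights $\ord_y f(\gamma_f(y),y)$ and $\ord_y g(\gamma_g(y),y)$. From the discussion after \eqref{eq:centers}, $\mathrm{diam}(D_f)\sim |\lambda|^{d_f/h_f}$ and $\mathrm{diam}(D_g)\sim |\lambda|^{d_g/h_g}$; by the bi-Lipschitz property of $\varphi$, $\mathrm{diam}(\varphi(D_f))\sim |\lambda|^{d_f/h_f}$ as well.

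\emph{Step 1: $\deg D_g \geq \deg D_f$, by contradiction.} Assume that on the contrary $d_g/h_g < d_f/h_f$, so that $\mathrm{diam}(\varphi(D_f))=o(\mathrm{diam}(D_g))$ as $\lambda\to 0$. Since $\varphi(D_f)\cap D_g\ne\emptyset$, the image $\varphi(D_f)$ is contained in a sub-region of $D_g$ of relative diameter $o(1)$. By Lemma \ref{l:dense}, the Gauss map of $g$ on $D_g$ has dense image in $\bP^1$, and its total variation across $D_g$ is bounded below by a constant; consequently on any sub-region of relative diameter $o(1)$ the direction of $\grad g$ is confined to an arc of $\bP^1$ of length $o(1)$. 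Now use the chain rule $\grad f = (\grad g\circ\varphi)\cdot M_\varphi$ together with the uniform bounds on $M_\varphi$ provided by Lemmas \ref{l:bounds1}--\ref{l:bounds2}: these guarantee that the correspondence on $\bP^1$ sending the direction of $\grad g\circ \varphi$ to the direction of $\grad f$ is a quasiconformal transformation with bounded dilation. Hence the Gauss image of $\grad f$ on $D_f$ is itself confined to an arc of $\bP^1$ of vanishing length, contradicting Lemma \ref{l:dense} applied to $f$.

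\emph{Step 2: $\varphi(D_f)\supset D_g$.} From $d_g/h_g \geq d_f/h_f$ we obtain $\mathrm{diam}(D_g)\lesssim \mathrm{diam}(\varphi(D_f))$. The bi-Lipschitz image $\varphi(D_f)$ contains an inner ball of radius $\gtrsim |\lambda|^{d_f/h_f}$ about $\varphi(c_f)$, where $c_f$ is the center of $D_f$. Pick any point $p\in \varphi(D_f)\cap D_g$; its distance to the center $c_g$ of $D_g$ is at most $\mathrm{diam}(D_g)\lesssim |\lambda|^{d_f/h_f}$. The canyon matching argument from Step 1 of the proof of Theorem \ref{t:localcurv} then identifies $\varphi(\gamma_f)$ with $\gamma_g$ up to this order, so $c_g$ lies in the inner ball of $\varphi(D_f)$; since $D_g$ has smaller diameter, it is contained entirely in $\varphi(D_f)$.

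\emph{Main obstacle.} The crux is Step 1, where one must transfer a diameter estimate into a quantitative bound on the variation of the gradient direction. This depends critically on the uniform bounds for the pseudo-derivative matrix $M_\varphi$ coming from Lemmas \ref{l:bounds1}--\ref{l:bounds2}, which allow one to treat the change of the direction of $\grad g$ into that of $\grad f$ as a projective map on $\bP^1$ with bounded distortion. Step 2 is then a comparatively direct matter of matching the centers of the canyon disks on the two sides.
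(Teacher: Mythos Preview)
Your overall strategy matches the paper's: prove the degree inequality by contradiction via the Gauss-map/matrix argument of Step~1 of Theorem~\ref{t:localcurv}, then deduce the inclusion from a size comparison. However, there is a genuine gap in your Step~1.

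The problematic sentence is: ``By Lemma~\ref{l:dense}, the Gauss map of $g$ on $D_g$ has dense image in $\bP^1$, and its total variation across $D_g$ is bounded below by a constant; consequently on any sub-region of relative diameter $o(1)$ the direction of $\grad g$ is confined to an arc of $\bP^1$ of length $o(1)$.'' This deduction is invalid. Lemma~\ref{l:dense} is a \emph{surjectivity} statement about the Gauss image; it gives no control on how that variation is distributed across $D_g$. Indeed, the whole phenomenon of curvature concentration in \cite{KKP} is precisely that the Gauss map can sweep out a large arc of $\bP^1$ over an arbitrarily small sub-region. So density of the image does not imply that small sub-regions have small Gauss image. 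What the paper actually uses (via the reference to ``the above proof of~\eqref{eq:lim}'') is the Henry--Parusi\'nski result that, away from the higher-order concentration zone, the \emph{order of variation} of the gradient direction is fixed at the scale of the ambient disk; this is what forces the direction of $\grad g$ to be asymptotically constant on the much smaller set $\varphi(D_f)$, and hence forces the total curvature there to vanish. You need that ingredient, not Lemma~\ref{l:dense}, to close the contradiction.

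Your Step~2 is also more involved than needed. The paper simply notes that the canyon disks are defined with a \emph{flexible} radius (the parameters $e'$ and $\e$ in $D^{(e')}_{\gamma_*,\e}$), so once $\deg D_g \ge \deg D_f$ and $\varphi(D_f)\cap D_g\neq\emptyset$, one adjusts the radii to obtain $D_g\subset \varphi(D_f)$ directly. Your appeal to ``the canyon matching argument from Step~1 of the proof of Theorem~\ref{t:localcurv}'' to identify $\varphi(\gamma_f)$ with $\gamma_g$ overstates what that step establishes; it only shows that $\varphi(D_f)$ meets \emph{some} horn, not that it pins down the polar. The inclusion is purely a matter of comparing orders of radii, with no center-matching required.
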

\begin{proof}
 The diameter of $\varphi(D_{f})$ is asymptotically of order equal to $\frac{1}{h}\deg D_{f}$, since $\varphi$ is bi-Lipschitz. So
if  $\deg D_{g} < \deg D_{f}$ it follows as in the above proof of \eqref{eq:lim}
that the total curvature over $\varphi(D_{f})$ must be zero asymptotically. This yields a similar contradiction as we have proved in Step 1 for \eqref{eq:lim}.

Now if $\deg D_{g} \ge \deg D_{f}$ then, by the definition of the disks (i.e. with fixed order and arbitrary radius) and since $\varphi(D_{f})\cap D_{g} \not= \emptyset$, it follows  that $\varphi(D_{f})$ includes $D_{g}$ for appropriate diameters.
\end{proof}

Applying now   Lemma \ref{l:incl} and Lemma \ref{l:totalcurvconcentr} ends the proof of Theorem
\ref{t:localcurv}.
\fin

\subsection{The correspondence of canyon disks}
Let us continue the above reasoning with:

\begin{lemma}\label{l:single}
$\varphi(D_{f})$ intersects a single disk $D_{g}$.
\end{lemma}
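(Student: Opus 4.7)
The plan is to argue by contradiction: assume that $\varphi(D_f)$ meets two \emph{distinct} canyon disks $D_g^{(1)}$ and $D_g^{(2)}$ of the Milnor fibre $g^{-1}(\lambda)$. By Lemma \ref{l:incl} applied to each of the two nonempty intersections, we obtain
\[
\varphi(D_f) \supset D_g^{(1)} \cup D_g^{(2)}, \quad\text{hence}\quad D_f \supset \varphi^{-1}(D_g^{(1)}) \cup \varphi^{-1}(D_g^{(2)}).
\]

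Next I reverse the roles of $f$ and $g$. Since $g = f\circ\varphi^{-1}$ with $\varphi^{-1}$ again bi-Lipschitz (with the same constants as $\varphi$), Theorem \ref{t:localcurv} and Lemma \ref{l:incl} apply symmetrically: for each $i\in\{1,2\}$ there exists a canyon disk $D_f^{(i)}$ of $f$ such that
\[
\varphi^{-1}(D_g^{(i)}) \supset D_f^{(i)}, \quad\text{so in particular}\quad D_f^{(i)} \subset D_f.
\]

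To close the argument I invoke the disjointness of canyon disks of $f$: by Theorem \ref{thmB}, distinct gradient canyons of degree $>1$ are mutually disjoint in $\bC^2$, while within a single canyon the conjugate disks are centered at points separated by distances of order $|\lambda|^{1/h}$, which dominate their radii of order $|\lambda|^{d_\gamma/h}$ since $d_\gamma > 1$. Therefore, for small enough horn parameters $\e, \eta$ and sufficiently small $|\lambda|$, any two distinct canyon disks of $f$ are disjoint. Since $D_f^{(i)}$ is a nonempty canyon disk of $f$ contained in the canyon disk $D_f$, we must have $D_f^{(i)} = D_f$ for $i=1,2$. Applying $\varphi$ to $\varphi^{-1}(D_g^{(i)}) \supset D_f$ gives $D_g^{(i)} \supset \varphi(D_f)$, and combined with the reverse inclusion $\varphi(D_f) \supset D_g^{(i)}$ already established, we get
\[
\varphi(D_f) = D_g^{(1)} = D_g^{(2)},
\]
contradicting the assumption $D_g^{(1)} \ne D_g^{(2)}$.

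The main delicate point, and the step I expect to demand the most care, is justifying that Theorem \ref{t:localcurv} and Lemma \ref{l:incl} apply in the symmetric form with $f$ and $g$ interchanged and $\varphi$ replaced by $\varphi^{-1}$. This rests on the observation that all the ingredients used in Step 1 and Step 2 (the pseudo-derivative bounds, the Henry-Parusi\'nski zone argument bounding the variation of the gradient on $G(\lambda)$, and the diameter estimates for $\varphi(D_{\gamma_{*, i}}(\lambda))$) depend only on the bi-Lipschitz constants, which are common to $\varphi$ and $\varphi^{-1}$; so the symmetric versions follow verbatim.
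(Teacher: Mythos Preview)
Your proof is correct and follows essentially the same strategy as the paper's: assume two distinct canyon disks of $g$ meet $\varphi(D_f)$, apply Lemma~\ref{l:incl} in both directions (for $\varphi$ and for $\varphi^{-1}$), and derive a contradiction from the disjointness of canyons of degree $>1$ (Theorem~\ref{thmB}).

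The paper organizes the endgame slightly differently. It first observes that if two \emph{disjoint} disks $D_g, D'_g$ both meet $\varphi(D_f)$, then both must have degree \emph{strictly} greater than $\deg D_f$ (otherwise, having the same order of diameter as $\varphi(D_f)$, they would be nested rather than disjoint). The reverse inclusion then produces a disk $D'_f \subset D_f$ with $\deg D'_f > \deg D_f$, which directly contradicts the disjointness of canyons of degree $>1$. Your route instead forces $D_f^{(i)} = D_f$ and then $D_g^{(1)} = D_g^{(2)}$; this works, but be aware that the literal set equalities $\varphi(D_f) = D_g^{(i)}$ are loose, since the inclusions in Lemma~\ref{l:incl} hold only ``for appropriate diameters'' and the two inclusions you combine may involve different radii. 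The clean way to close your argument is to note that both $D_g^{(1)}$ and $D_g^{(2)}$ contain $\varphi(D_f)$ for some choice of radius, hence intersect each other, hence coincide as canyon disks by the very disjointness you already invoked.
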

\begin{proof}
If  $D_{g}$ and $D'_{g}$ are two disjoint disks of $g$ which intersect $\varphi(D_{f})$, then they are of degree strictly greater than $\deg D_{f}$, otherwise they must be included one into the other up to rescaling their  radii.  Hence they are included in $\varphi(D_{f})$, by Lemma \ref{l:incl}.

Next, by applying $\varphi^{-1}$  we get  $\varphi^{-1}(D_{g}) \subset D_{f}$ with $\deg D_{g} > \deg D_{f}$, hence,   by Step 1 and Lemma \ref{l:incl},  there must exist another disk $D'_{f} \subset \varphi^{-1}(D_{g})$ with $ \deg D'_{f}\ge \deg D_{g}$. But this means that we have the inclusion $D'_{f} \subset D_{f}$ with the inequality $\deg D'_{f} > \deg D_{f}$  and this contradicts one of the fundamental results of \cite{KKP} that canyons of degree $>1$ are disjoint.
\end{proof}

We therefore have a graduate bijection between canyon disks of $f$ and canyon disks of $g$, respecting the degrees. More precisely, we have shown the following:

\begin{theorem}\label{t:main0}
The bi-Lipschitz map $\varphi$ establishes a bijection between the canyon disks of $f$ and the canyon disks of $g$ by preserving the canyon degree.
 \fin
\end{theorem}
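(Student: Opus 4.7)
The plan is to assemble the bijection directly from the ingredients already established in Steps 1 and 2 and in Lemma \ref{l:single}, and then to close the argument by symmetry with respect to $\varphi^{-1}$.

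First, starting from any canyon disk $D_{f}$ of $f$ of canyon degree $d_{f}>1$, Step 1 (inside the proof of Theorem \ref{t:localcurv}) produces at least one canyon disk $D_{g}$ of $g$ with $\varphi(D_{f}) \cap D_{g} \neq \emptyset$. Lemma \ref{l:single} upgrades this to \emph{uniqueness} of $D_{g}$, so the rule $D_{f} \mapsto D_{g}$ is a well-defined map $\Phi$ from canyon disks of $f$ to canyon disks of $g$. Lemma \ref{l:incl} then gives $\deg D_{g} \ge \deg D_{f}$ and the inclusion $D_{g} \subset \varphi(D_{f})$.

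Second, I would apply exactly the same reasoning to the bi-Lipschitz inverse $\varphi^{-1}$, yielding a map $\Psi$ from canyon disks of $g$ to canyon disks of $f$ with $\deg \Psi(D_{g}) \ge \deg D_{g}$ and $\Psi(D_{g}) \subset \varphi^{-1}(D_{g})$. The compatibility check is the following: for any canyon disk $D_{f}$ of $f$, we have $D_{g} := \Phi(D_{f}) \subset \varphi(D_{f})$, hence $\varphi^{-1}(D_{g}) \subset D_{f}$; since $\Psi(D_{g}) \subset \varphi^{-1}(D_{g}) \subset D_{f}$ and canyons of degree $>1$ are pairwise disjoint (Theorem \ref{thmB}, as invoked at the end of the proof of Lemma \ref{l:single}), the only canyon disk of $f$ contained in $D_{f}$ is $D_{f}$ itself, so $\Psi(\Phi(D_{f})) = D_{f}$. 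Symmetrically $\Phi(\Psi(D_{g})) = D_{g}$, and $\Phi$, $\Psi$ are mutually inverse bijections.

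Third, the preservation of degree is now automatic: combining the inequality $\deg \Phi(D_{f}) \ge \deg D_{f}$ with the inequality $\deg \Psi(\Phi(D_{f})) \ge \deg \Phi(D_{f})$ and the identity $\Psi \circ \Phi = \mathrm{id}$, we obtain $\deg D_{f} \ge \deg \Phi(D_{f}) \ge \deg D_{f}$, so equality holds.

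The main obstacle is really conceptual rather than technical at this stage, and it has already been absorbed into the preceding work: namely, showing that the degree cannot \emph{drop} under $\varphi$, which was handled in Step 1 via the Henry--Parusi\'nski estimates on the gradient variation outside the higher-order horns, combined with the density on $\bP^{1}$ of the complex Gauss image of a canyon disk (Lemma \ref{l:dense}) and the lower bound on $\varphi_{2,y}$ inside a canyon (Lemma \ref{l:bounds2}). Once those analytic inputs are in place, the present theorem is a clean combinatorial consolidation.
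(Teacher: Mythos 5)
Your proposal is correct and takes essentially the same route as the paper: the paper states Theorem~\ref{t:main0} as a direct consequence of Step~1, Lemma~\ref{l:incl}, and Lemma~\ref{l:single}, and the symmetry argument via $\varphi^{-1}$ (which you make explicit through the maps $\Phi$ and $\Psi$) is already implicit in the paper's proof of Lemma~\ref{l:single}, where applying $\varphi^{-1}$ and invoking the disjointness of canyons of degree $>1$ rules out $\deg D_g > \deg D_f$. Your packaging of the compatibility check $\Psi\circ\Phi=\mathrm{id}$ and the degree-squeeze $\deg D_f \ge \deg\Phi(D_f)\ge\deg D_f$ is a cleaner way to present what the paper leaves as a brief summary sentence, but it does not introduce a genuinely different idea.
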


We will show that this key theorem further  yields bi-Lipschitz invariants. 

\noindent 
\subsection{The multi-layer cluster decomposition.}\label{ss:layer}

Let $f = g \circ \varphi$, for a bi-Lipschitz homeomorphism  $\varphi$.  
Even if one cannot prove  anymore  that the image by  $\varphi$ of a canyon is a canyon, as we did in Theorem \ref{t:main1} for a bi-holomorphic $\varphi$, 
we will derive from Theorem \ref{t:main0} that $\varphi$ induces a bijection between the gradient canyons of $f$ and those of $g$. 
Moreover, we show here that there are well-defined ``clusters'' of canyons of $f$ which are sent by $\varphi$ into similar clusters of $g$,
and that such clusters are determined by certain rational integers which are  bi-Lipschitz invariants.

We consider the tangential canyons only, i.e. those of degree more than 1.  The canyon of degree 1 is preserved, since it covers the Milnor fibre entirely,  together with 
its multiplicity,  or, equivalently, its partial Milnor number, or its total curvature, as follows directly from \cite{GT},  \cite{KKP}.


Note that the exponent $h$ from \eqref{eq:h-coefficient} is  a topological invariant, see e.g. \cite{GT}, \cite{KKP}.  
 We may group the canyons in terms of the essentials bars of the tree of $f$, namely those canyons departing from an essential bar  $B(h)$ corresponding to  $h$, i.e. associated to the polars leaving the tree of $f$ on that bar $B(h)$. Their contact, for distinct canyons, can be greater or equal to the co-slope of the corresponding bar, say $\theta_{B(h)}$, but less than their canyon degrees.

 
 The \emph{order of contact}, see \cite{KKP}, between two different holomorphic arcs $\alpha$ and $\beta$ is well defined as: 
 \begin{equation}\label{eq:order}
 \max \ord_{y} (\alpha(y) - \beta(y))
 \end{equation}
where the maximum is taken over all conjugates of $\alpha$ and of $\beta$. 
Whenever the canyons $\GC(\gamma_{1*})\ni \alpha_{*}$ and $\GC(\gamma_{2*})\ni \beta_{*}$ are different and both of degree $d>1$, this order is lower than $d$ and 
therefore does not depend  on the choice of $\alpha_{*}$ in the first canyon, and of $\beta_{*}$ in the second canyon.
This yields a well-defined \emph{order of contact between two canyons of degree $d$}. 

 In a similar way we can define the contact of any two canyons as the contact of the corresponding polars in the canyons.
  The contacts between the Puiseux roots of $f$ 
 are automatically preserved by $\varphi$, because we have similar trees for $f$ and $g$ (topological equivalence).
 The more interesting situations appear after the polars leave the tree, namely  at a higher level than the co-slopes $\theta_{B(h)}$. 
 

\m

Let  $G_{d}(f)$ be the union of gradient canyons of a fixed degree $d>1$.  Let  $G_{d, B(h)}(f)$ be the union of canyons the polars of which  grow on the same bar $B(h)$, 
  for $d>\theta_{B(h)} >1$, more precisely those canyons of degree $d$ with the same top edge relative to the Newton polygon relative to polar.

One then has the disjoint union decomposition:
\begin{equation}\label{eq:first}
 G_{d}(f) = \bigsqcup_{h}G_{d, B(h)}(f). 
\end{equation}
Note that each canyon from $G_{d, B(h)}(f)$ has the same contact, higher than 1, with a fixed irreducible component  $\{f_{i} =0\}$.


 
Next,  each cluster union of canyons  $G_{d, B(h)}(f)$ has a partition into unions of canyons according to the mutual order of contact between canyons.  More precisely, a fixed  gradient canyon  $\GC_i(f) \subset  G_{d, B(h)}(f)$ has a well defined order of contact $k(i,j)$ with some other gradient canyon $\GC_j(f) \subset  G_{d, B(h)}(f)$ from the same cluster; we count also the \emph{multiplicity} of each such contact, i.e. the number of  canyons $\GC_j(f)$ from the cluster $G_{d, B(h)}(f)$ which have exactly the same contact with $\GC_i(f)$.

Let then $K_{d,B(h), i}(f)$ be the (un-ordered) set of those contact orders $k(i,j)$ of the fixed canyon $\GC_i(f)$, counted with multiplicity.

 Let now $G_{d, B(h), \omega}(f)$ be the union of canyons from $G_{d,B(h)}(f)$ which have exactly the same set $\omega = K_{d,B(h), i}(f)$ of  orders of contact with the other canyons from $G_{d,B(h)}(f)$.  This defines a partition:
 
\begin{equation}\label{eq:second} 
G_{d,B(h), \omega}(f) = \bigsqcup_{\omega} G_{d, B(h)}(f).
\end{equation}
 
In this way each canyon has its ``identity card'' composed  of these orders of contact (which are rational numbers),  and it belongs to a certain  cluster  $G_{d, B(h), \omega}(f)$ in the partition of $G_{d}(f)$. 
 It is possible that two canyons have the same ``identity card''.  We clearly have, by definition, the inclusions:
 \[  G_{d}(f) \supset G_{d, B(h)}(f) \supset G_{d, B(h), \omega}(f)
 \]
 for any defined indices.

With these notations, we have the following far reaching extension of Theorem \ref{t:main0}:

\begin{theorem}\label{t:main2}
The bi-Lipschitz map $\varphi$ induces a bijection between the gradient canyons of $f$ and those of $g$.
The following are  bi-Lipschitz invariants:
\begin{enumerate}
\rm \item \it the set $G_{d}(f)$ of canyon degrees $d>1$, and for each fixed degree $d>1$, each bar $B$  and rational  $h$, the cluster of canyons $G_{d, B(h)}(f)$.
\rm \item \it the set of contact orders $K_{d,B(h), i}(f)$,  and for each such set, the sub-cluster of canyons $G_{d,B(h), K_{d,B(h), i}}(f)$.
\end{enumerate}

Moreover, $\varphi$  preserves the contact orders between any two clusters of type $G_{d,B(h), K_{d,B(h), i}}(f)$.
\end{theorem}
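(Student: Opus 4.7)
My plan is to bootstrap Theorem~\ref{t:main0} by combining its disk-level bijection with the topological invariance of the Eggers tree and the metric rigidity of $\varphi$. I would proceed in three stages.

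\emph{Stage 1 (canyon-level bijection).}  Pick $e'$ with $1 < e' < d$ strictly greater than every pairwise contact order between distinct canyons of $f$ and of $g$ of degree $d$ (finitely many values, all $< d$).  With this choice, the horns $\Horn^{(e')}(\gamma_{*};\e;\eta)$ associated to distinct canyons of degree $d$ are pairwise disjoint:  at each height $y$, two such horns are separated in the $x$-direction by $\sim |y|^{k}$ where $k$ is the contact, while each horn has width $\e |y|^{e'} < |y|^{k}$.  Under the bi-Lipschitz map $\varphi$, the image of the horn of one canyon of $f$ has width at each relevant $\tilde y \sim |y|$ bounded by a constant multiple of $|y|^{e'}$, hence too small to straddle the separation $\sim |y|^{k}$ between canyon horns in the $g$-space.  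Thus $\varphi$ sends the horn of each canyon of $f$ into the horn of a single canyon of $g$, which by Theorem~\ref{t:main0} has the same degree $d$.  Applying the same reasoning to $\varphi^{-1}$ yields a bijection on canyons of degree $d > 1$; the canyon of degree $1$ is handled separately, as indicated in the text.

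\emph{Stage 2 (bar invariance).}  The value $h = \ord_{y} f(\gamma(y), y)$ is a topological invariant (see \cite{GT, KKP}), determined by intersection multiplicities in the Eggers tree.  Since $\varphi$ is a homeomorphism, $f$ and $g$ have combinatorially identical trees, and the bar $B(h)$ is preserved under the canyon bijection of Stage~1.  This gives the invariance of $G_{d, B(h)}(f)$ and thus of the decomposition \eqref{eq:first}.

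\emph{Stage 3 (contact orders).}  For two canyons $C_{1} = \GC(\gamma_{1*})$, $C_{2} = \GC(\gamma_{2*})$ in $G_{d, B(h)}(f)$ with contact $k := \ord_{y}(\gamma_{1} - \gamma_{2})$, $1 < k < d$, the horns around $C_{1}$ and $C_{2}$ at each height $y$ are separated by $\bC^{2}$-distance $\sim |\gamma_{1}(y) - \gamma_{2}(y)| \sim |y|^{k}$.  This order of distance is preserved by the bi-Lipschitz $\varphi$ up to multiplicative constants, while in the $g$-space the separation between the image canyons $\tilde C_{1}, \tilde C_{2}$ at the analogous height is $\sim |y|^{\tilde k}$, $\tilde k$ being their contact order.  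Comparing, $k = \tilde k$.  Applying this to every pair of canyons in the cluster gives invariance of the multiset $K_{d, B(h), i}(f)$, hence of the finer partition $G_{d, B(h), \omega}(f)$.  The same argument applied to canyons in distinct sub-clusters yields the preservation of contact orders between any two sub-clusters.

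\emph{Main obstacle.}  The critical step is the width-versus-separation estimate in Stage~1: one must ensure that the bi-Lipschitz image of a horn $\Horn^{(e')}$ does not develop protrusions that cross from one canyon horn of $g$ into another.  This is controlled by the uniform thinness of $\Horn^{(e')}$ at each $y$-level and the fact that the bi-Lipschitz constant distorts this thinness only by a bounded factor, leaving the image strictly thinner than the ambient inter-canyon gap $\sim |y|^{k}$. A delicate point is to verify that a single $e'$ works simultaneously for both $f$ and $g$ and for all canyon pairs at all bars, which is possible since all relevant contact orders are strictly $<d$ and only finitely many occur.
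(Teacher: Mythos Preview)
Your overall strategy—bootstrapping Theorem~\ref{t:main0} and using the bi-Lipschitz preservation of distance orders—is correct in spirit, and Stages~2 and~3 are close to the paper's reasoning. But Stage~1 contains a genuine gap, and the paper orders the deductions differently precisely to avoid it.

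\textbf{The gap in Stage~1.} Your width-versus-separation estimate assumes $\varphi(\Horn^{(e')}(\gamma_*))$ is thin at each $\tilde y$-level. This fails for two reasons. First, a bi-Lipschitz map need not respect the $y$-fibration: two points of the horn at heights $y_1 \ne y_2$ may land on the same $\tilde y$-slice, giving the image width $\sim |y_1 - y_2|$ there rather than $\sim |y|^{e'}$. Second—and this is what your ``main obstacle'' paragraph misses—the horn $\Horn^{(e')}(\gamma_*)$ is by definition the union over \emph{all conjugates} of $\gamma$; once $e'$ exceeds some of the contacts among these conjugates, the horn splits into branches spread at each height over a region of width $\sim |y|^{r}$ for various $r<e'$ (possibly $r$ close to $1$). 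So the horn is not a single thin set at all, and nothing prevents its bi-Lipschitz image from meeting several $g$-canyon horns. Consequently you have no mechanism to conclude that all disks of a given $f$-canyon are sent, under the disk bijection of Theorem~\ref{t:main0}, to disks of a \emph{single} $g$-canyon.

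\textbf{How the paper proceeds instead.} The paper never attempts horn-to-horn containment. It stays inside the Milnor fibre: Theorem~\ref{t:main0} (with Lemmas~\ref{l:incl} and~\ref{l:single}) gives a bijection of individual canyon disks preserving degree, and the bi-Lipschitz property preserves the asymptotic order of the distance between any two such disks. Since the contact order between two canyons is precisely this distance order between their disks, the partitions \eqref{eq:first} and \eqref{eq:second} are recovered from the disk-distance data and are therefore invariant. Only \emph{after} this is the canyon-level bijection extracted, from the multiplicities inside the finest clusters $G_{d,B(h),\omega}$. In short, the paper reverses your order: disk distances $\Rightarrow$ contact orders $\Rightarrow$ cluster partitions $\Rightarrow$ canyon bijection. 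Your Stage~3 is essentially this argument, but it presupposes the canyon correspondence of Stage~1, which is exactly the step that cannot be carried out first by the horn method.
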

\begin{proof} 
We know from Theorem \ref{t:main0}, Lemma \ref{l:incl} and Lemma \ref{l:single} that $\varphi$ induces a bijection between canyon disks since every canyon disk of $f$ is sent by $\varphi$ to a unique canyon disk of $g$. 
The contact between two canyons of degree $d>1$ translates to an asymptotic order of the distances between
the canyon disks in the Milnor fibre. The map $\varphi$ transforms the Milnor fibre $\{f=\lambda\}$ into the Milnor fibre  $\{g=\lambda\}$ and we know that canyon disks are sent to canyon disks of the same degree (Theorem \ref{t:main0}). In addition, 
 the order of the distance between any two disks is preserved by $\varphi$ since it is bi-Lipschitz.
 
 \smallskip

We then check the order of the distance between disks corresponding to two different canyons and translate it to the order of contact \eqref{eq:order} between these canyons, starting with the lowest orders which are higher than 1.  Doing this on the set $G_{d}(f)$ will
 have as  result the partition \eqref{eq:first}. 
Continuing to do this with each cluster of canyons $G_{d, B(h)}(f)$ will have as result  the partition \eqref{eq:second}.
 This proves (a) and (b).
 
  \smallskip
 
Our first assertion follows now from the bijective correspondence between the smallest clusters, as follows. In case if one small cluster  
 of type $G_{d,h, K_{d,h, i}}(f)$ contains more than one gradient canyon, the number of canyons is detected by the multiplicity of the contact order, and this multiplicity is obviously preserved by the bi-Lipschitz map $\varphi$. 
  
By the same reasons as above, we get our last claim, that $\varphi$  preserves the contact orders between any cluster of canyons.
\end{proof}






\end{document}